\def\N{\mathbb{N}}
\def\Z{\mathbb{Z}}
\def\R{\mathbb{R}}
\newtheorem{prop}{Proposition}
\newtheorem{cor}[prop]{Corollary}
\newtheorem{thm}[prop]{Theorem}
\newtheorem{rem}[prop]{Remark}
\newtheorem{lemma}[prop]{Lemma}
\newcommand{\be}{\begin{equation}}
\newcommand{\ee}{\end{equation}}
\newcommand{\bea}{\begin{eqnarray}}
\newcommand{\eea}{\end{eqnarray}}
\newcommand{\beas}{\begin{eqnarray*}}
\newcommand{\eeas}{\end{eqnarray*}}
\newcommand\ol{\overline}
\newcommand\ul{\underline}
\newcommand\dsprod{\displaystyle\prod}
\newcommand\dssum{\displaystyle\sum}
\newcommand\vp{\varphi}
\newcommand\UU{\mathbb{U}}
\begin{document}

\title{Sequences of Integers with Missing Quotients and 
Dense Points Without Neighbors}

\author{Tanya Khovanova\\MIT \and Sergei Konyagin\\Steklov Mathematical 
Institute}
\maketitle

\begin{abstract}
Let $A$ be a pre-defined set of rational numbers. 
We say a set of natural numbers $S$ is an $A$-quotient-free set if no ratio of two elements in $S$ belongs to $A$. We find the maximal asymptotic density and the maximal upper asymptotic density of $A$-quotient-free sets when $A$ belongs to a particular class.

It is known that in the case $A = \{p,\ q\}$, where $p$, $q$ are coprime integers greater than one, the latest problem is reduced to evaluation of the largest number of lattice non-adjacent points in a triangle whose legs 
lie on coordinate axis. We prove that this number is achieved by 
choosing points of the same color in the checkerboard coloring.
\end{abstract}

\emph{Keywords:} Quotient-free set, Asymptotic density, Logarithmic density.

\section{Introduction}

\subsection{Definitions and notation}\label{sec:def}

Suppose $A$ is a finite set of positive rational numbers. A set $S$ of positive
integers is called \emph{an $A$-quotient-free set}
if no ratio of two elements in $S$ belongs to $A$. In this paper we will study 
density properties of $A$-quotient-free sets.

For $c\in \R$ we denote $[c] = \{k\in\N:\,k\le c\}$.
In particular, for $n\in\N$ we have $[n] = \{1,\ 2,\ \ldots,\ n\}$.

For any set $S$ of positive integers, let $\ol{\delta} (S)$ be the upper 
asymptotic density of $S$:
$$\ol{\delta} (S) = \limsup_{X\to\infty} \frac{|S \cap [X]|}{X}.$$
Similarly, we define the lower asymptotic density $\ul{\delta} (S)$ as: 
$$\ul{\delta} (S) = \liminf_{X\to\infty} \frac{|S \cap [X]|}{X}.$$
In particular, the upper asymptotic density and the lower asymptotic density 
of finite sets is 0. 
If $\ol{\delta} (S) = \ul{\delta} (S)$, then we say that $S$ has the 
asymptotic density $\delta (S)$, where 
$\delta (S) = \ol{\delta} (S) = \ul{\delta} (S)$.

Next, we define the upper logarithmic density of a set $S\subset\N$:
$$\ol{\delta}_{\log} (S) = \limsup_{X\to\infty} 
\frac{\sum_{k\in S, k\le X}1/k}{\ln  X}$$
and the lower logarithmic density of a set $S\subset\N$:
$$\ul{\delta}_{\log} (S) = \liminf_{X\to\infty} 
\frac{\sum_{k\in S, k\le X}1/k}{\ln  X}.$$
If $\ol{\delta}_{\log} (S) = \ul{\delta}_{\log} (S)$, then we say that $S$ has 
the logarithmic density $\delta_{\log}(S)$, where 
$\delta_{\log}(S) = \ol{\delta}_{\log}(S) = \ul{\delta}_{\log}(S)$.

It is known that
\be\label{logasym}
\ul{\delta} (S)\le \ul{\delta}_{\log} (S) \le \ol{\delta}_{\log} (S)
\le \ol{\delta} (S)
\ee
(see \cite{Ten}, Part~III, Theorem~2).
Therefore, if $\delta(S)$ exists then $\delta_{\log}(S)$ also exists
and $\delta_{\log}(S)=\delta(S)$.

We are interested in describing the highest density achieved by $A$-quotient 
free sets. We define the following measures of density limitations: 
$\ol{\rho}(A)$, $\ul{\rho}(A)$, $\rho (A)$ (see \cite{Chen}).
For 
$$\ol{\rho}(A) = \sup_S \ol{\delta} (S) \text{\  and \ } 
\ul{\rho}(A) = \sup_S \ul{\delta} (S),$$
the supremum is taken over all $A$-quotient-free sets $S$. For
$$\rho(A) = \sup_S \delta (S)$$
the supremum is taken over all $A$-quotient-free sets $S$ for which 
$\delta (S)$ exists. 

Similarly, we define $\ol{\rho}_{\log}(A)$, $\ul{\rho}_{\log}(A)$, $\rho_{\log}(A)$:

$$\ol{\rho}_{\log}(A) = \sup_S \ol{\delta}_{\log}(S) \text{\  and \ } 
\ul{\rho}_{\log}(A) = \sup_S \ul{\delta}_{\log}(S)$$
and 
$$\rho_{\log}(A) = \sup_S \delta_{\log}(S).$$

It is easy to see that 
\be\label{aslimitineq}
\ol{\rho} (A) \geq \ul{\rho} (A) \geq \rho (A)
\ee
and
\be\label{loglimitineq}
\ol{\rho}_{\log}(A) \geq \ul{\rho}_{\log}(A) \geq \rho_{\log}(A).
\ee
Also, by (\ref{logasym}),
\be\label{asloglimitcompare}
\ol{\rho} (A) \geq \ol{\rho}_{\log}(A),\quad
\ul{\rho} (A) \leq \ul{\rho}_{\log}(A),\quad
\rho(A) \leq \rho_{\log}(A).
\ee

\subsection{Known results}\label{sec:known}

The following known results are of our interest (see \cite{Chen}).

Let $A = \{a_1, a_2, \ldots, a_r\}\subset\N$, where 
$1 < a_1 < a_2 < \cdots < a_r$ and $a_i$ are pairwise coprime. Let 
$M = M(a_1,\ldots,a_r) = \{m_1 < m_2 < \cdots\}$ be an ordered set of integers 
of the form $a_1^{u_1}a_2^{u_2}\cdots a_r^{u_r}$, where 
$u_i \geq 0$. The function $f(t) = f(A,t)$ denotes the maximal 
cardinality of $A$-quotient-free subsets of $\{m_1, m_2, \ldots, m_t\}$.

\begin{thm}\label{thm1} We have
$$\rho(A) \geq\frac{1}{2}\left(1+\dsprod_{i=1}^r \frac{a_i-1}{a_i+1}\right).$$
\end{thm}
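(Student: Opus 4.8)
The plan is to exhibit an explicit $A$-quotient-free set $S$ whose asymptotic density equals the right-hand side. Since $\delta(S)$ will exist, $S$ is admissible in the supremum defining $\rho(A)$, and the bound follows immediately.

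For each $i$ let $v_i(n)$ denote the exact multiplicity of $a_i$ in $n$, i.e.\ the largest exponent $u$ with $a_i^u \mid n$. Because the $a_i$ are pairwise coprime, they have disjoint sets of prime divisors, so $v_i(n)$ depends only on the $p$-adic valuations of $n$ at primes dividing $a_i$; in particular, multiplying $n$ by $a_i$ raises $v_i$ by one and leaves every $v_j$ with $j\neq i$ unchanged. Set $\sigma(n) = \sum_{i=1}^r v_i(n)$ and define $S = \{n \in \N : \sigma(n) \equiv 0 \pmod 2\}$. If $x,y \in S$ had ratio $x/y = a_i$, then $x = a_i y$ would give $\sigma(x) = \sigma(y)+1$, contradicting that both are even. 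Hence no ratio of two elements of $S$ equals any $a_i$, so $S$ is $A$-quotient-free.

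It remains to compute $\delta(S)$. The set $\{n : v_i(n) \geq k\} = \{n : a_i^k \mid n\}$ has density $a_i^{-k}$, so $\{n : v_i(n) = k\}$ has density $\frac{a_i - 1}{a_i^{k+1}}$. Because the $a_i$ are pairwise coprime, the Chinese Remainder Theorem makes the conditions $v_i(n)=k_i$ independent: the set on which $v_i(n) = k_i$ for all $i$ is a union of residue classes modulo $\prod_i a_i^{k_i+1}$ and has density $\prod_i \frac{a_i-1}{a_i^{k_i+1}}$. Using $\mathbf{1}[\sigma \text{ even}] = \tfrac12\bigl(1 + (-1)^{\sigma}\bigr)$ and summing over all $(k_1,\dots,k_r)$, the expression factors as $\tfrac12\bigl(1 + \prod_{i=1}^r \sum_{k \geq 0} (-1)^k \frac{a_i-1}{a_i^{k+1}}\bigr)$; since each geometric sum evaluates to $\frac{a_i-1}{a_i+1}$, this is exactly $\tfrac12\bigl(1 + \prod_i \frac{a_i-1}{a_i+1}\bigr)$.

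The one point needing real care is that this formal sum of densities is the genuine asymptotic density of $S$, i.e.\ that the limit defining $\delta(S)$ exists. I would settle this by truncation: fix $N$ and note that the tail set $T_N = \{n : v_i(n) \geq N \text{ for some } i\}$ satisfies $\ol{\delta}(T_N) \leq \sum_i a_i^{-N}$, while on its complement the parity condition defining $S$ depends only on $n \bmod \prod_i a_i^{N}$, so $S \setminus T_N$ is periodic and has an exact density. Letting $N \to \infty$ squeezes $\ul{\delta}(S)$ and $\ol{\delta}(S)$ together to the common value computed above, so $\delta(S)$ exists. This truncation estimate is the main (though routine) obstacle; everything else rests on the two-coloring by parity of $\sigma$ together with the disjoint-prime-support structure forced by pairwise coprimality.
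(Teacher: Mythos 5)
Your proposal is correct, and the set you construct is in fact the very set the paper uses: writing $k$ uniquely as $k=mn$ with $m=a_1^{u_1}\cdots a_r^{u_r}$ and $n$ divisible by no $a_i$ (Lemma~\ref{lemma1}), pairwise coprimality forces $u_i=v_i(k)$, so your parity set $\{k:\sigma(k)\equiv 0 \pmod 2\}$ coincides with the set built in Lemma~\ref{lowest} from the white exponent vectors $\hat E$ of Lemma~\ref{white}. The difference is in the route. The paper treats Theorem~\ref{thm1} as a known result of \cite{Chen} and only recovers it as a byproduct of machinery aimed at the stronger Theorems~\ref{thm3} and~\ref{thm4}: the $M(B)$/$N(B)$ decomposition, the extremal quantity $\gamma(A,B)$, a compactness argument for attainment of the supremum, partial summation against Lemma~\ref{lemma2}, and finally a citation to \cite{Chen} for the closed-form value $\vp(A)\gamma(A)=\frac{1}{2}\left(1+\prod_{i=1}^r \frac{a_i-1}{a_i+1}\right)$. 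You argue directly: quotient-freeness follows from additivity of the valuations $v_i$ (your remark that this persists for non-prime $a_i$ because the $a_i$ have disjoint prime supports is exactly the point needing care, and you handle it correctly); the density is computed via CRT-independence of the events $\{v_i(n)=k_i\}$; and the interchange of limit and infinite sum is justified by truncation to the periodic sets $S\setminus T_N$, which squeezes $\ul{\delta}(S)$ and $\ol{\delta}(S)$ together. Your approach buys a fully self-contained, elementary proof of the lower bound, including the evaluation of the constant, which the paper never carries out in full; what it does not provide is the general framework the paper needs elsewhere, namely sharpness of the bound (Theorem~\ref{thm4}) and existence of optimal sets for arbitrary finite sets of rationals (Theorem~\ref{thm3}).
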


\begin{thm}\label{thm2} We have
$$\ol{\rho}(A) = \dsprod_{i=1}^r \left(1-\frac{1}{a_i}\right) 
\dssum_{t=1}^\infty f(t)\left(\frac{1}{m_t} - \frac{1}{m_{t+1}}\right).$$
\end{thm}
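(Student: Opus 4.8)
The plan is to decompose $\N$ into multiplicative threads, bound an arbitrary $A$-quotient-free set thread by thread, and then assemble a near-extremal set along a sparse sequence of scales. First I would set up the decomposition. Writing $u_i(n)$ for the exponent of the largest power of $a_i$ dividing $n$, pairwise coprimality lets me factor every $n$ uniquely as $n=b\cdot m$ with $m=\dsprod_i a_i^{u_i(n)}\in M$ and $b=b(n)$ divisible by no $a_i$; call such a $b$ a \emph{base}. Thus $\N$ is partitioned into threads $\mathcal{T}_b=\{bm:m\in M\}$, and by inclusion--exclusion (using pairwise coprimality) the base integers have density $\dsprod_i(1-1/a_i)$. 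Since ratios within a thread lie in the multiplicative group generated by the $a_i$ while ratios between distinct threads do not, a set is $A$-quotient-free if and only if its restriction to each thread is.

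Next, the upper bound. For a base $b$, division by $b$ identifies $\mathcal{T}_b\cap[X]$ with $\{m_1,\dots,m_{t(b,X)}\}$, where $t(b,X)=|\{m\in M:m\le X/b\}|$, and this identification preserves ratios; hence $|S\cap\mathcal{T}_b\cap[X]|\le f(t(b,X))$ for any $A$-quotient-free $S$, and summing gives $|S\cap[X]|\le\dssum_b f(t(b,X))$. I would then show that $g(X):=X^{-1}\dssum_b f(t(b,X))$ converges to the right-hand side $L$ of the claimed identity. Grouping the bases $b$ by the value $t(b,X)=t$ counts base integers in $(X/m_{t+1},X/m_t]$, which is $\dsprod_i(1-1/a_i)\,X(1/m_t-1/m_{t+1})+O(1)$, so $g(X)=\dsprod_i(1-1/a_i)\dssum_t f(t)(1/m_t-1/m_{t+1})+o(1)$. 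Both the convergence of this series and the control of the error term follow from $f(t)\le t$ together with $|M\cap[X]|=O((\log X)^r)$, which forces $m_t$ to grow faster than any power of $t$. This yields $\ol\delta(S)\le L$ for every $A$-quotient-free $S$, hence $\ol\rho(A)\le L$.

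For the matching construction, note that for each fixed $X$ one may choose, independently in every thread, a maximum $A$-quotient-free subset of $\mathcal{T}_b\cap[X]$; their union $S^{*,X}$ is $A$-quotient-free with $|S^{*,X}\cap[X]|=\dssum_b f(t(b,X))\sim LX$. To convert these scale-dependent optima into a single set, I would fix rapidly increasing scales $X_1\ll X_2\ll\cdots$ and widths $W_n\to\infty$, and set $S=\bigcup_n\bigl(S^{*,X_n}\cap(X_n/W_n,\,X_n]\bigr)$. Because any ratio in $A$ is at most $a_r$, elements taken from windows at sufficiently separated scales cannot have ratio in $A$, so large enough gaps between the $X_n$ make $S$ globally $A$-quotient-free; and passing from $S^{*,X_n}\cap[X_n]$ to the window discards at most $X_n/W_n$ integers, so $|S\cap[X_n]|\ge LX_n-o(X_n)$. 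Hence $\ol\delta(S)\ge L$, and together with the upper bound $\ol\rho(A)=L$.

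I expect the construction, not the bound, to be the crux. The point is that a single set of genuine density $L$ need not exist: an Abel-summation argument shows $\dssum_{m\in T}1/m\le L/\dsprod_i(1-1/a_i)$ for every $A$-quotient-free $T\subseteq M$, with equality forcing $T$ to be the ``jump set'' $\{m_t:f(t)-f(t-1)=1\}$, which is in general \emph{not} itself $A$-quotient-free. This is exactly why the extremal object for $\ol\rho$ must be built along a sparse sequence of scales via the windowing trick rather than as one density-$L$ set, and the remaining technical burden is making the limit $g(X)\to L$ rigorous, for which the growth rate of $M$ is the decisive input.
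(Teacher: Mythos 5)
Your proof is correct and takes essentially the approach the paper itself points to: the paper does not reprove Theorem~\ref{thm2} (it is quoted from \cite{Chen}), but the decomposition into equivalency classes sketched in subsection~\ref{sec:newres} --- your ``threads'' $\mathcal{T}_b$ --- is exactly the argument the paper attributes to \cite{Chen} for this theorem, and your thread-by-thread bound via $f$, the inclusion--exclusion count of base integers, and the windowed construction along sparse scales are the natural completion of that reduction. Your closing remark that no single set of genuine density $L$ need exist, so that the supremum for $\ol\rho$ must be realized only in upper density, is confirmed by the paper's Theorem~\ref{thm7}.
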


Although we know the exact value of $\ol{\rho}(A)$ from Theorem~\ref{thm2},
it is not easy to apply this theorem directly since in general we do not know
a simple algorithm to evaluate $f(t)$ for large $t$. To describe a more 
explicit lower estimate for $\ol{\rho}(A)$ obtained in \cite{Chen}, we need 
some more notation. 

Let
$$m_i=a_1^{u_{i1}}\dots a_r^{u_{ir}},\quad i=1,2,\dots.$$
For $j=0,1$ we define
$$A_j(t)=A_j(a_1,\dots,a_r,t)=\{m_i:\,
u_{i1}+\dots+u_{ir}\equiv j\pmod{2},\,i=1,\dots,t\}.$$
Let
$$\ol\sigma(A)=\dsprod_{i=1}^r\dssum_{t=1}^\infty
\max\left(|A_0(a_1,\dots,a_r,t)|,|A_1(a_1,\dots,a_r,t)|\right)
\left(\frac 1{m_t} - \frac 1{m_{t+1}}\right).$$

The more explicit estimate obtained in \cite{Chen}
as a corollary from Theorem~\ref{thm2} is

\begin{cor}\label{cor1} We have
$$\ol{\rho}(A) \ge \ol{\sigma}(A).$$
\end{cor}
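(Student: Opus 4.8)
The plan is to obtain this as an immediate consequence of Theorem~\ref{thm2}, by bounding $f(t)$ from below through the parity (checkerboard) coloring that defines the sets $A_0(t)$ and $A_1(t)$. The entire content of the argument reduces to one observation: each color class is itself an $A$-quotient-free set, so it is an admissible competitor in the definition of $f(t)$.

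First I would verify that for each $t$ and each $j\in\{0,1\}$ the set $A_j(t)$ is an $A$-quotient-free subset of $\{m_1,\dots,m_t\}$. Write two elements of $M$ as $m=a_1^{u_1}\cdots a_r^{u_r}$ and $m'=a_1^{v_1}\cdots a_r^{v_r}$. Because the $a_i$ are pairwise coprime, this representation is unique: comparing the exponent of any prime, which divides exactly one of the $a_i$, forces $u_i=v_i$. Now if $m/m'=a_k\in A$, then unique factorization gives $u_k=v_k+1$ and $u_i=v_i$ for $i\ne k$, so $u_1+\cdots+u_r$ and $v_1+\cdots+v_r$ have opposite parities. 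Hence $m$ and $m'$ lie in different color classes, and no two elements of the same class can have ratio in $A$. This yields the termwise bound $f(t)\ge\max\!\left(|A_0(t)|,|A_1(t)|\right)$ for every $t$.

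Then I would substitute this inequality into the exact formula of Theorem~\ref{thm2}. Since $m_t<m_{t+1}$, every weight $\frac{1}{m_t}-\frac{1}{m_{t+1}}$ is nonnegative, and the prefactor $\prod_{i=1}^r\!\left(1-\frac{1}{a_i}\right)$ is positive; therefore the termwise inequality $f(t)\ge\max\!\left(|A_0(t)|,|A_1(t)|\right)$ survives summation and multiplication, producing exactly $\ol\rho(A)\ge\ol\sigma(A)$. Convergence of the series defining $\ol\sigma(A)$ is automatic, as it is dominated term by term by the finite series in Theorem~\ref{thm2}. I do not expect a genuine obstacle here, since the statement is a true corollary; the only place demanding care is the coloring step, namely invoking pairwise coprimality of the $a_i$ to guarantee that the exponent vector $(u_1,\dots,u_r)$, and hence the parity $u_1+\cdots+u_r\bmod 2$, is well defined for each $m\in M$. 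Once that is in place, the parity flip under multiplication by any $a_k$ is immediate and the corollary drops out of Theorem~\ref{thm2}.
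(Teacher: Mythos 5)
Your proof is correct and follows essentially the same route as the paper's (which cites \cite{Chen} for this corollary): one observes that each parity class $A_j(t)$ is $A$-quotient-free because multiplication by any $a_k$ flips the parity of the exponent sum, so $f(t)\ge\max\left(|A_0(t)|,|A_1(t)|\right)$, and then the termwise bound is substituted into the exact formula of Theorem~\ref{thm2}. This is exactly the observation the paper itself highlights (``points of the same color are never adjacent''), so nothing further is needed.
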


\subsection{Statement of new results}\label{sec:newres}

\begin{thm}\label{thm3} For any finite set $A$ of positive rational numbers 
we have
$$\rho(A)=\ul\rho(A)=\rho_{\log}(A)=\ul\rho_{\log}(A)
=\ol\rho_{\log}(A).$$
Moreover, there exists an $A$-quotient-free set $S$ such that
$\delta(S)=\rho(A)$.
\end{thm}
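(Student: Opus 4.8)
The five quantities are, by (\ref{logasym})--(\ref{asloglimitcompare}), linked by the chains
\[
\rho(A)\le\rho_{\log}(A)\le\ul\rho_{\log}(A)\le\ol\rho_{\log}(A),\qquad
\rho(A)\le\ul\rho(A)\le\ul\rho_{\log}(A),
\]
so $\rho(A)$ is the smallest and $\ol\rho_{\log}(A)$ the largest of the five. It therefore suffices to prove the single reverse inequality $\ol\rho_{\log}(A)\le\rho(A)$, and to exhibit one $A$-quotient-free set whose asymptotic density exists and equals the common value; this simultaneously yields all equalities and the ``moreover'' clause. The plan is to produce a distinguished set $S^\ast$ for which $\delta(S^\ast)$ exists, and to show that its density bounds $\ol\delta_{\log}(S)$ from above for \emph{every} competitor $S$.

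I would exploit the multiplicative self-similarity of the quotient-free condition. Let $P=\{p_1,\dots,p_k\}$ be the primes dividing some numerator or denominator of an element of $A$, write each $a\in A$ as $\prod_i p_i^{c_i(a)}$ with $c_i(a)\in\Z$, and set $D=\{(c_1(a),\dots,c_k(a)):a\in A\}\subset\Z^k$. Every positive integer factors uniquely as $n=v\dsprod_i p_i^{e_i}$ with $v$ coprime to every $p\in P$ (call such $v$ \emph{$P$-free}) and $e=(e_1,\dots,e_k)\in\Z_{\ge0}^k$. Since each $a\in A$ involves only the primes of $P$, a ratio $s/s'\in A$ forces $s,s'$ to share the same $P$-free part $v$ and their exponent vectors to differ by an element of $D$ (a short lowest-terms argument rules out cross-fiber ratios). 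Hence $S$ is $A$-quotient-free if and only if, in each fiber indexed by a $P$-free $v$, the set $E_v\subseteq\Z_{\ge0}^k$ of occurring exponent vectors is \emph{$D$-difference-free}: it contains no two vectors differing by an element of $D$.

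For a $D$-difference-free $E\subseteq\Z_{\ge0}^k$ put $S_E=\{v\dsprod_i p_i^{e_i}:v\text{ is }P\text{-free},\ e\in E\}$, which is $A$-quotient-free, and define
\[
F(E)=\dsprod_{p\in P}\Bigl(1-\frac1p\Bigr)\,\dssum_{e\in E}\dsprod_{i=1}^{k}p_i^{-e_i}.
\]
I would first check that $\delta(S_E)$ \emph{exists} and equals $F(E)$: grouping $n\le X$ by its exponent vector, the count with fixed $e$ equals the number of $P$-free integers up to $X\prod_i p_i^{-e_i}$, which is $X\prod_{p\in P}(1-1/p)\prod_i p_i^{-e_i}+O(2^k)$; only the $O((\log X)^k)$ vectors with $\prod_i p_i^{e_i}\le X$ contribute, so the error is $o(X)$, while the main term tends to $XF(E)$ once the geometrically small tail is discarded. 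Next, $\sup_E F(E)$ is \emph{attained}: on the compact space $\{0,1\}^{\Z_{\ge0}^k}$ the $D$-difference-free sets form a closed subset, and because the weights $\prod_i p_i^{-e_i}$ are summable, $F$ is continuous for pointwise convergence; thus a maximizer $E^\ast$ exists, and I take $S^\ast=S_{E^\ast}$.

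The crux is the upper bound $\ol\delta_{\log}(S)\le F(E^\ast)$ for an arbitrary $A$-quotient-free $S$, and I expect the main difficulty to lie in controlling competitors that deploy different difference-free patterns $E_v$ across fibers and scales. The resolution I would pursue is a crude but valid majorization: using $1/n=(1/v)\prod_i p_i^{-e_i}$,
\[
\dssum_{n\in S,\,n\le X}\frac1n
=\dssum_{\substack{v\le X\\ P\text{-free}}}\frac1v\dssum_{\substack{e\in E_v\\ \prod_i p_i^{e_i}\le X/v}}\dsprod_i p_i^{-e_i}
\le\Bigl(\dssum_{\substack{v\le X\\P\text{-free}}}\frac1v\Bigr)\,\max_{E}\dssum_{e\in E}\dsprod_i p_i^{-e_i},
\]
where the inner constraint is dropped (all terms positive) and the maximum is over $D$-difference-free $E$, hence equals $F(E^\ast)\big/\prod_{p\in P}(1-1/p)$. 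Since $\sum_{v\le X,\,P\text{-free}}1/v=\prod_{p\in P}(1-1/p)\ln X+O(1)$, the right side is $F(E^\ast)\ln X+o(\ln X)$, giving $\ol\delta_{\log}(S)\le F(E^\ast)$. Combining this with $\rho(A)\ge\delta(S^\ast)=F(E^\ast)$ yields $\ol\rho_{\log}(A)\le F(E^\ast)\le\rho(A)$, so the chains force all five quantities to equal $F(E^\ast)$, attained by $S^\ast$. As a consistency check, when $A=\{p_1,\dots,p_r\}$ consists of distinct primes the maximizer is the checkerboard $E^\ast=\{e:\sum_i e_i\text{ even}\}$, and $F(E^\ast)=\tfrac12\bigl(1+\dsprod_i\frac{p_i-1}{p_i+1}\bigr)$, matching Theorem~\ref{thm1}.
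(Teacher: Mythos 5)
Your proposal is correct and follows essentially the same route as the paper: the same fiber decomposition by $P$-free parts, the same reformulation via difference-free subsets of $\Z_{\ge0}^k$ with weight $\prod_i p_i^{-e_i}$, the same Tikhonov-compactness argument for attaining the supremum, the same per-fiber harmonic-sum bound giving $\ol\rho_{\log}(A)\le\vp(B)\gamma(A,B)$, and the same extremal construction repeating one optimal pattern in every fiber. The only (immaterial) difference is that you fix the base set to be the primes dividing the elements of $A$, whereas the paper allows any set $B$ of pairwise coprime integers with the primes as its motivating example.
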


Theorem~\ref{thm3} gives a negative answer to Question~2 from \cite{Chen}, which asked whether there are any sets $A$ such that $\ol{\rho} (A) = \ul{\rho} (A) \neq \rho (A)$.

We evaluate $\rho(A)$ in terms of the solution of an extremal problem
on the set of subsets of $\Z_+^s$ for some $s$. 

If $A$ is a set described in subsection~\ref{sec:known}, then we can 
find an explicit value of $\rho(A)$.

\begin{thm}\label{thm4} Let $A = \{a_1, a_2, \ldots, a_r\}\subset\N$, where 
$1 < a_1 < a_2 < \cdots < a_r$ and $a_i$ are pairwise coprime. Then
$$\rho(A) = \frac{1}{2}\left(1+\dsprod_{i=1}^r \frac{a_i-1}{a_i+1}\right).$$
\end{thm}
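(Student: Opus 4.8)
The plan is to combine the lower bound already supplied by Theorem~\ref{thm1} with a matching upper bound, which is where all the difficulty lies. Since Theorem~\ref{thm3} gives $\rho(A)=\ol\rho_{\log}(A)$, it suffices to show that every $A$-quotient-free set $S$ satisfies
$$\ol\delta_{\log}(S)\le \frac12\left(1+\prod_{i=1}^r\frac{a_i-1}{a_i+1}\right).$$
To exploit the multiplicative structure I would first partition $\N$. Let $B=\{b\in\N:\ a_i\nmid b\text{ for all }i\}$; because the $a_i$ are pairwise coprime, every $n$ has a \emph{unique} representation $n=b\,\prod_i a_i^{v_i}$ with $b\in B$ and $v=(v_1,\dots,v_r)\in\Z_{\ge0}^r$, so $\N=\bigsqcup_{b\in B} bM$. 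The crucial point is that each coset $bM$ is a \emph{full} copy of the grid $\Z_{\ge0}^r$ (indexed by $v$), with no truncation, since $b\in B$ is the coordinatewise-minimal element of its coset. Under this identification the forbidden ratios $a_i$ correspond exactly to the grid edges $v\leftrightarrow v+e_i$, so $S\cap bM$ is an independent set $I_b$ in the grid graph on $\Z_{\ge0}^r$, and the logarithmic weight of $n=b\prod a_i^{v_i}$ is $\frac1b w(v)$ with $w(v)=\prod_i a_i^{-v_i}$.

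With this reduction in place, the heart of the matter is a weighted extremal lemma: for the geometric weight $w$ on $\Z_{\ge0}^r$, every grid-independent set $I$ obeys
$$\sum_{v\in I} w(v)\ \le\ W_{\max}:=\frac12\left(\prod_{i=1}^r\frac{a_i}{a_i-1}+\prod_{i=1}^r\frac{a_i}{a_i+1}\right),$$
with equality for the checkerboard class $\{v:\ \sum_i v_i\ \text{even}\}$. This is precisely the ``dense points without neighbors'' phenomenon: among all independent sets, one colour class of the checkerboard colouring is optimal. I expect \textbf{this lemma to be the main obstacle}, and I would prove it by induction on $r$ via slicing on the last coordinate. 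Writing $I_k=\{u\in\Z_{\ge0}^{r-1}:(u,k)\in I\}$, each slice $I_k$ is independent and consecutive slices are disjoint. The induction hypothesis caps each slice weight by $p:=W_{\max}^{(r-1)}$, while disjointness of $I_k,I_{k+1}$ inside the whole grid caps their sum by the total weight $T:=\prod_{i<r}\frac{a_i}{a_i-1}$. Maximising $\sum_{k\ge0} a_r^{-k}W^{(r-1)}(I_k)$ subject to $x_k\le p$ and $x_k+x_{k+1}\le T$ is then a one-dimensional linear program; since the coefficients $a_r^{-k}$ decay, a short exchange argument forces the front-loaded alternating optimum $x_k=p,q,p,q,\dots$ (where $q=T-p$), which is realised exactly by the checkerboard and evaluates to $W_{\max}$. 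Both constraints are genuinely needed, so the delicate part is arguing that the induction hypothesis and the disjointness bound are the only binding inequalities.

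Finally I would assemble the global estimate. For any $A$-quotient-free $S$,
$$\sum_{n\in S,\ n\le X}\frac1n=\sum_{b\in B,\ b\le X}\frac1b\sum_{\substack{v\in I_b\\ b\prod a_i^{v_i}\le X}}w(v)\ \le\ W_{\max}\sum_{b\in B,\ b\le X}\frac1b,$$
the truncation of the inner sum only helping. Since $B$ is a union of residue classes modulo $\prod_i a_i$ of density $\prod_i(1-1/a_i)$, we have $\sum_{b\in B,\ b\le X}1/b\sim \prod_i(1-1/a_i)\ln X$, whence
$$\ol\delta_{\log}(S)\le W_{\max}\prod_{i=1}^r\Big(1-\frac1{a_i}\Big)=\frac12\left(1+\prod_{i=1}^r\frac{a_i-1}{a_i+1}\right),$$
the products telescoping cleanly. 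As this bounds $\ol\rho_{\log}(A)=\rho(A)$ from above and Theorem~\ref{thm1} bounds it from below, equality follows. (The extremal set is the explicit $S^\ast=\{n:\ \sum_i v_{a_i}(n)\ \text{even}\}$, which is $A$-quotient-free because multiplying by any $a_i$ flips the parity, and whose asymptotic density is exactly the claimed value; this also reconfirms the ``moreover'' clause of Theorem~\ref{thm3}.)
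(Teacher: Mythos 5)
Your proposal is correct and takes essentially the same route as the paper: your key weighted extremal lemma is precisely the paper's Lemma~\ref{white} (checkerboard optimality for the geometric weight on $\Z_+^r$), proved the same way --- induction on $r$, slicing on the last coordinate, pairing consecutive slices, and combining the disjointness bound with the induction cap --- and your coset decomposition and logarithmic-density assembly mirror the machinery of Lemmas~\ref{lemma1}--\ref{upest} behind Theorem~\ref{thm3}. The only differences are organizational: you take the lower bound from Theorem~\ref{thm1} and carry out the final telescoping product yourself, where the paper instead uses its self-contained extremal construction and cites \cite{Chen} for that computation.
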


So, we show the sharpness of the estimate in Theorem~\ref{thm1}.

If, moreover, $r=2$, then we find a convenient expression for $\ol\rho(A)$ below. 

\begin{thm}\label{thhm5} Let $A=\{p,q\}\subset\N$, where $1<p<q$ and 
$\gcd(p,q)=1$. Then
$$\ol{\rho}(A) = \ol{\sigma}(A).$$
\end{thm}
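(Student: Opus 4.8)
The plan is to combine the exact formula for $\ol\rho(A)$ from Theorem~\ref{thm2} with the lower bound of Corollary~\ref{cor1}, reducing the claim to a single termwise identity for the counting function $f(t)$. Writing $A=\{p,q\}$, Theorem~\ref{thm2} gives $\ol\rho(A)=(1-1/p)(1-1/q)\sum_{t}f(t)(1/m_t-1/m_{t+1})$, while $\ol\sigma(A)$ is the sum of exactly the same shape with $f(t)$ replaced by $\max(|A_0(t)|,|A_1(t)|)$. Two elements of the same checkerboard class $A_j(t)$ have $u$-coordinate sums of equal parity, so their ratio cannot be $p$ or $q$ (either ratio forces the sums to differ by the odd number $1$); hence each class is $A$-quotient-free and $f(t)\ge\max(|A_0(t)|,|A_1(t)|)$, which is precisely Corollary~\ref{cor1}. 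Since every weight $1/m_t-1/m_{t+1}$ is positive, the equality $\ol\rho(A)=\ol\sigma(A)$ follows once I prove the termwise identity $f(t)=\max(|A_0(t)|,|A_1(t)|)$ for all $t$.

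Next I would recast $f(t)$ geometrically. Associating to $m_i=p^{u_{i1}}q^{u_{i2}}$ the lattice point $(u_{i1},u_{i2})\in\Z_{\ge0}^2$, the set $R_t=\{(u_{i1},u_{i2}):i\le t\}$ is a finite order ideal (downward closed set) of $\Z_{\ge0}^2$, because $m_i\le m_j$ whenever one point dominates the other coordinatewise. An $A$-quotient-free subset of $\{m_1,\dots,m_t\}$ is exactly an independent set of the grid graph $G_t$ on $R_t$, whose edges join points differing by $(1,0)$ (ratio $p$) or $(0,1)$ (ratio $q$); thus $f(t)$ is the independence number of $G_t$. The graph $G_t$ is bipartite, its colour classes being $A_0(t)$ and $A_1(t)$ according to the parity of $u_{i1}+u_{i2}$, so by K\"onig's theorem the independence number equals $|R_t|-\nu(G_t)=|A_0(t)|+|A_1(t)|-\nu(G_t)$, where $\nu$ is the maximum matching number. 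Therefore $f(t)=\max(|A_0(t)|,|A_1(t)|)$ is equivalent to $\nu(G_t)=\min(|A_0(t)|,|A_1(t)|)$, that is, to the existence of a matching of $G_t$ saturating the smaller colour class.

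The heart of the argument is the lemma that the grid graph of any finite order ideal $R\subset\Z_{\ge0}^2$ admits a matching saturating its smaller colour class, and I expect this to be the main obstacle: for general bipartite graphs the smaller side need not satisfy Hall's condition, so the staircase structure of $R$ must be used essentially. The tool is the monotonicity of the column heights $h_0\ge h_1\ge\cdots$ forced by $R$ being an order ideal. I would first lay vertical dominoes from the bottom of each column; this matches everything except the top cell of each odd-height column, and that leftover cell in column $a$ has parity $\equiv a\pmod 2$, so the signed count of leftover cells reproduces $|A_0(t)|-|A_1(t)|$. It then remains to eliminate the leftover cells of minority parity, and here the weak decrease of the heights lets one route an augmenting path between the tops of two consecutive odd-height columns of opposite parity, reducing the unmatched set until only the $\bigl||A_0(t)|-|A_1(t)|\bigr|$ excess cells of the majority colour survive.

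A more robust route to the same lemma is induction on $t$, inserting $m_1,m_2,\dots$ in increasing order. The only nontrivial case is when the new point $P=(u_{t+1,1},u_{t+1,2})$ joins the strictly smaller class; since $R_{t+1}$ is an order ideal and $P$ is its largest element, multiplying by $p$ or $q$ leaves $R_{t+1}$, so the only neighbours of $P$ present are $(u_{t+1,1}-1,u_{t+1,2})$ and $(u_{t+1,1},u_{t+1,2}-1)$, and one shows an augmenting path from $P$ always exists by following the staircase boundary. Either way, establishing the matching lemma yields the termwise identity $f(t)=\max(|A_0(t)|,|A_1(t)|)$, and the theorem follows from the reduction in the first paragraph.
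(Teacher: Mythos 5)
Your first two paragraphs are correct and follow the same opening move as the paper: the paper likewise deduces Theorem~\ref{thhm5} from Theorem~\ref{thm2} together with the termwise identity $f(t)=\max(|A_0(t)|,|A_1(t)|)$ (its Corollary~\ref{cor2}), and your observation that by K\"onig's theorem this identity is equivalent to the existence of a matching of the grid graph saturating the smaller colour class is a legitimate, genuinely different reformulation (the paper instead proves Corollary~\ref{cor2} via Theorem~\ref{thm6}, by taking a maximum independent set and shifting its points, diagonal by diagonal, until they are monochromatic). The gap is that your matching lemma carries the entire combinatorial content of the theorem, and neither of your two sketches proves it. For the first route: the unmatched top of an odd-height column $c$ has colour equal to the parity of $c$, so the left-to-right colour sequence of the leftover cells can be, for instance, white, white, black, black: take column heights $h=(5,4,3,3,2,1)$, whose odd-height columns are $0,2,3,5$. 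The only \emph{consecutive} opposite-coloured pair is columns $2,3$; after you augment there (their tops $(2,2)$ and $(3,2)$ are adjacent), the remaining unmatched cells $(0,4)$ and $(5,0)$ are tops of non-consecutive odd columns, and the matching between them is no longer the vertical-domino one, so your pairing rule says nothing. Here both colour classes have $9$ cells, so the lemma demands a perfect matching; an augmenting path from $(0,4)$ to $(5,0)$ does exist, but it must thread through rows $0$ and $1$ of columns $1$ through $4$, and proving that such a path always exists --- for arbitrary nesting patterns of leftover colours, with the matching already modified by earlier augmentations --- is exactly the missing argument.

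Your second route has the same problem in purer form: the sentence ``one shows an augmenting path from $P$ always exists by following the staircase boundary'' is, via Berge's theorem, equivalent to the induction step you are trying to prove, and no argument is offered for it; moreover, whether a boundary walk alternates correctly depends on which maximum matching of $R_t$ you maintain inductively, and you never specify one. Note also that you assert the lemma for all finite order ideals of $\Z_+^2$, which is strictly stronger than the paper's Theorem~\ref{thm6}: the paper's shifting argument exploits the convexity of the triangle (its diagonal cross-sections are contiguous runs of lattice points), whereas an order ideal such as $h=(4,1,1,1)$ has the disconnected diagonal $x+y=3$ consisting of $(0,3)$ and $(3,0)$ only. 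Your generalization may well be true, but as written the proposal reduces the theorem, correctly and cleanly, to a lemma whose proof is not supplied --- and that lemma is precisely the hard part, to which the paper devotes its entire Section~3.
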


So, we show the sharpness of the estimate in Corollary~\ref{cor1}.

The proof of Theorem~\ref{thhm5} is based on a combinatorial result which 
might have an independent interest. Before stating our related results, we 
will give some comments on reduction of the problem of finding the densest
$A$-quotient-free sets to a combinatorial problem in the plane. 

We are interested in finding the largest $A$-quotient-free subset in $[N]$.
We say that two integers $i$ and $j$ belong to the same equivalency class if 
$i/j = p^x q^y$, where $x$ and $y$ are integers. In other words, there exists 
$z$ not divisible by $p$ and $q$, such that $i = p^{x_1}q^{y_1}z$ and 
$j = p^{x_2}q^{y_2}z$, where $x_i$ and $y_i$ are non-negative integers.
The ratio of two numbers from different equivalency classes cannot belong to 
the set $A$. Thus, to find the largest $A$-quotient-free subset in $[N]$ we 
need to find the largest such subset in every class. That is, for every $z$ we 
need to find the largest $A$-quotient-free subset among the numbers of the 
form $p^xq^y z$ such that they do not exceed $N$:  $p^xq^y \leq N/z = n$. 
That means we can ignore $z$ and study $A$-quotient-free subset of the numbers 
of the form $p^xq^y$ in the range $[n]$. Equivalently, we can arrange numbers 
of the form $p^xq^y$ into an increasing sequence and study the largest 
$A$-quotient-free subsets among the first $t$ elements of this sequence.

The above arguments were used in \cite{Chen} for the proof of 
Theorem~\ref{thm2}.

So, we are interested in finding the largest $A$-quotient-free subset 
of numbers of the form $p^xq^y$, bounded by $n$: $p^xq^y \leq n$, or, 
equivalently, $(\ln p) x + (\ln q) y \leq \ln n$. Our goal translated into the 
new formulation is to find the maximum number of points with integral 
coefficients, lattice points, in the given triangle 
defined by the inequalities $x \geq 0$, $y \geq 0$ and $ax + by \leq c$, 
where $a=\ln p>0$, $b=\ln q >0$ and $c=\ln n>0$ such that no two points 
are horizontally or vertically adjacent. We will keep $a$ and $b$ fixed and 
change $c$. We denote the triangle as $\triangle_c$, and the maximum number of 
points is $f(t)$, where $t$ is the total number of lattice points in the 
triangle $\triangle_c$.

Let us color the lattice points on the plane in the checkerboard order. 
Lattice points with an even sum of coordinates we color in white and with an 
odd sum of coordinates in black. See Figure~\ref{fig:checkerboard}.

\begin{figure}[htb!]
\centering
\includegraphics[scale=0.6]{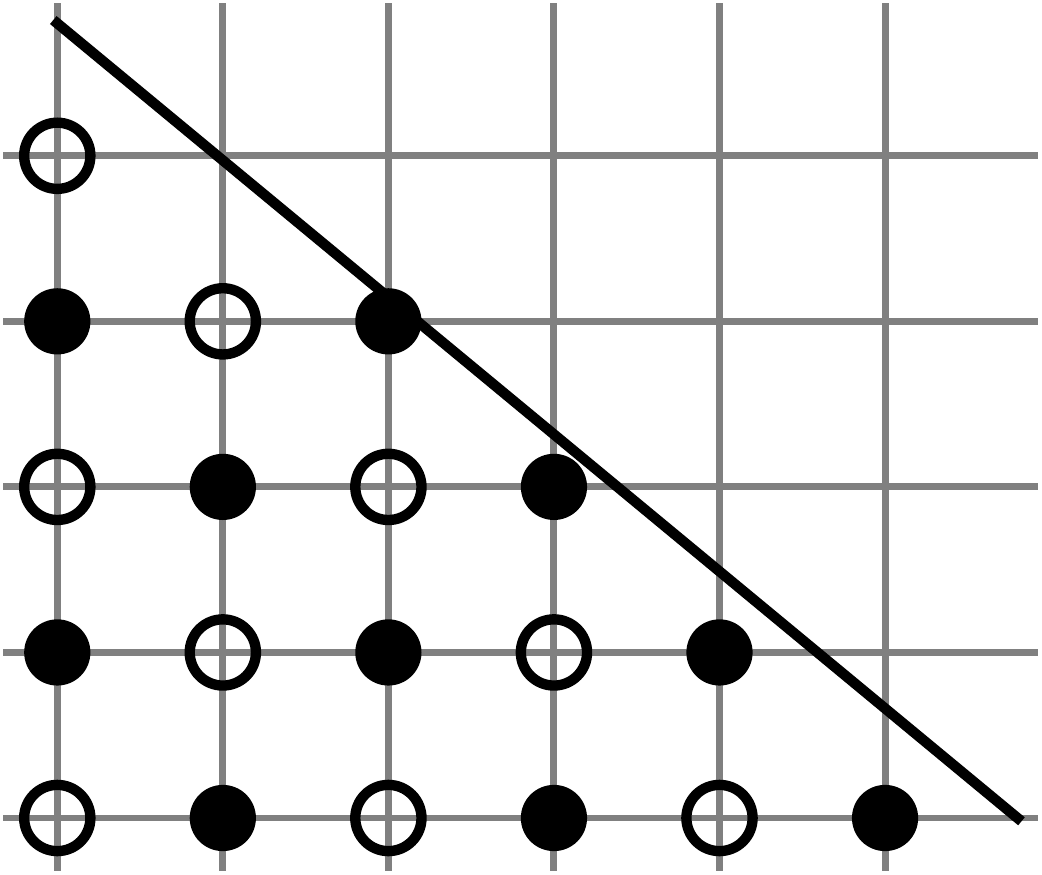}
\caption{Checkerboard coloring.}
\label{fig:checkerboard}
\end{figure}

Points of the same color are never adjacent. So the points of the same color 
are good candidates for our largest set. In Figure~\ref{fig:checkerboard} 
we can count 9 white points 
and 10 black points inside the triangle. 
Thus, the black points correspond to a larger $A$-quotient-free subset. The 
theorem below shows that we cannot find a larger set than the points of 
the same color.

\begin{thm}\label{thm6}
Given a line $ax + by \leq c$, where $a > 0$, $b > 0$ and $c > 0$, the 
maximum number of points with non-negative integer coordinates $\{x,\ y\}$, 
that satisfy the condition $ax + by \leq c$ is the maximum of two numbers: the 
number of black points satisfying the condition or the number of white points.
\end{thm}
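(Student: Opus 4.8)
The plan is to phrase the statement as a maximum-independent-set problem in a bipartite graph and then apply matching theory. Let $G$ be the graph whose vertices are the lattice points of $\triangle_c$ (the points with $x,y\ge 0$ and $ax+by\le c$) and whose edges join points that are horizontally or vertically adjacent; I must show that the largest independent set of $G$ has size $\max(|W|,|B|)$, where $W$ and $B$ denote the sets of white and black points. Since adjacent points have opposite colours, $G$ is bipartite with parts $W$ and $B$. The easy inequality is that the maximum independent set is at least $\max(|W|,|B|)$: each colour class is itself an independent set, because any two points of the same colour differ in parity from their common neighbours. The whole content of the theorem is the reverse inequality.

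To obtain the upper bound I would invoke K\"onig's theorem: in a bipartite graph the size of a maximum independent set equals the number of vertices minus the size of a maximum matching. Writing $\nu$ for the matching number and $n=|W|+|B|$, the desired bound $\alpha\le\max(|W|,|B|)=n-\min(|W|,|B|)$ is equivalent to $\nu\ge\min(|W|,|B|)$, and since trivially $\nu\le\min(|W|,|B|)$, this is the same as $\nu=\min(|W|,|B|)$. In other words, it suffices to produce a matching that saturates the smaller of the two colour classes.

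The key structural fact is that $\triangle_c$ is a staircase region: for each row $y=k$ the admissible points are exactly $x=0,1,\dots,r_k-1$ with $r_k=\lfloor (c-bk)/a\rfloor+1$, and $r_k$ is non-increasing in $k$ because $b>0$. Thus the region is a Young diagram, left-justified with weakly decreasing row lengths. Assuming without loss of generality that $B$ is the smaller class, I would first match points within each row by horizontal dominoes $(2j,k)$--$(2j+1,k)$; since colours alternate along a row, every such pair contains one white and one black point, and the only unmatched point of a row of odd length is its endpoint, whose colour is fixed by the parity of $k$. A short count shows that after this step the number of unmatched white points minus the number of unmatched black points equals $|W|-|B|\ge 0$. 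It then remains to absorb all leftover black points by rerouting them, through vertical edges and alternating paths, to leftover white points in nearby rows, using the freedom to slide the unmatched point of each odd row to any position of the correct parity.

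I expect this rerouting of the leftover minority points to be the main obstacle, and the place where the monotonicity of $r_k$ must be used essentially; purely per-row counting is too weak, as it only yields $\sum_k\lceil r_k/2\rceil\ge\max(|W|,|B|)$, the slack reflecting exactly the vertical-adjacency constraints that couple the rows. A cleaner way to package the step is to verify Hall's condition for the smaller colour class directly, showing $|N(T)|\ge|T|$ for every set $T$ of black points, or equivalently to construct an explicit injection from $B$ into $W$ sending each black point to an adjacent white one; the staircase shape should make such an injection buildable by sweeping the diagram inward from its outer corner. Either route reduces the theorem to a single combinatorial lemma about Young diagrams, after which K\"onig's theorem finishes the proof.
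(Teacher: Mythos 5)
Your reduction via K\"onig's theorem is sound as far as it goes: the adjacency graph on the lattice points of $\triangle_c$ is bipartite with parts $W$ and $B$, each colour class is independent, and since in a finite bipartite graph the maximum independent set has size $n-\nu$ (with $n$ the number of vertices and $\nu$ the matching number), the inequality $\alpha\le\max(|W|,|B|)$ is equivalent to the existence of a matching saturating the smaller colour class. But that equivalence is all you have established, and it is exactly where the proposal stops being a proof. The saturating matching --- equivalently, Hall's condition $|N(T)|\ge|T|$ for every subset $T$ of the smaller class --- is not a routine lemma that K\"onig's theorem then converts into the result; by the very equivalence you set up, it \emph{is} the theorem, restated in matching language. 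The constructions you gesture at (``rerouting through vertical edges and alternating paths'', an injection ``buildable by sweeping the diagram inward from its outer corner'') are never carried out: you do not specify the map, you do not verify Hall's condition for an arbitrary $T$ (whose points can be scattered over many rows with overlapping neighbourhoods, which is precisely where greedy sweeps fail), and you concede yourself that this step is ``the main obstacle''. Note also that which class is smaller depends on $c$ --- the paper's Figure~1 has $|B|>|W|$, while for small $c$ white dominates --- so any sweeping construction must handle both parities; your row-by-row bookkeeping (leftover whites minus leftover blacks equals $|W|-|B|$) is correct but, as you observe, too weak to couple the rows.

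For contrast, the paper proves the theorem directly, with no matching theory: starting from an optimal independent set $S$, it inducts on the diagonals $x+y=n$, and at each step moves the points of $S$ lying on the new diagonal (one step left, or partly down and partly left around a missing point $P$, or --- when the entire diagonal lies in $S$ --- moves the strictly interior points up), in three cases according to whether the diagonal meets the hypotenuse and whether every lattice point of the diagonal belongs to $S$. Each move preserves $|S|$, creates no adjacencies, and makes the processed region monochromatic, which is exactly the constructive content your proposal defers. If you want to complete your route, the outstanding statement is: in any staircase region of the lattice with weakly decreasing row lengths, Hall's condition holds from the smaller colour class into the larger. That lemma is true, but proving it seems to require an induction of essentially the same kind as the paper's point-moving argument, so the K\"onig repackaging by itself buys nothing.
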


If $t$ is the total number of lattice points in the triangle $\triangle_c$, 
then $A_0(p,q,t)$ ($A_1(p,q,t)$) is the number of white (black) points in the 
triangle $\triangle_c$.

\begin{cor}\label{cor2} If $1<p<q$ and $\gcd(p,q)=1$, then
$$f(\{p,q\},t)=\max\left(|A_0(p,q,t)|,|A_1(p,q,t)|\right).$$
\end{cor}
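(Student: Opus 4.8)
The plan is to reformulate the statement as an extremal problem on a bipartite grid graph and then reduce it to the existence of a large matching. Let $R$ be the set of lattice points $(x,y)$ with $x,y\ge 0$ satisfying $ax+by\le c$, and build the graph $G$ on $R$ whose edges join horizontally or vertically adjacent points. Since any move to an adjacent point flips the parity of $x+y$, the checkerboard coloring is a proper $2$-coloring of $G$: the white set $W$ (even $x+y$) and the black set $B$ (odd $x+y$) are both independent sets, so the quantity we seek is the independence number $\alpha(G)$, and trivially $\alpha(G)\ge\max(|W|,|B|)$. The crucial geometric feature I would record first is that $R$ is a staircase: for fixed $y$ the admissible $x$ form the interval $\{0,1,\dots,\lfloor (c-by)/a\rfloor\}$, whose length is weakly decreasing in $y$. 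Thus $R$ is exactly the set of cells of a Young diagram, and this monotonicity of the boundary is what I expect to drive the whole argument.

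Next I would isolate the combinatorial core. Suppose we can produce a matching $M$ in $G$ (a set of pairwise disjoint dominoes) of size $|M|=\min(|W|,|B|)$, i.e.\ one saturating the minority color class. Partition the $t$ points of $R$ into the $|M|$ dominoes together with the $t-2|M|$ remaining singletons. Any independent set $S$ meets each domino in at most one point and each singleton in at most one point, so $|S|\le |M|+(t-2|M|)=t-|M|=t-\min(|W|,|B|)=\max(|W|,|B|)$. Combined with the independence of the majority color class, this gives $\alpha(G)=\max(|W|,|B|)$ and proves the theorem. Hence everything reduces to showing that the grid graph on a Young diagram admits a matching saturating its smaller color class; equivalently, writing $\nu$ for the maximum matching number, that $\nu=\min(|W|,|B|)$. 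The inequality $\nu\le\min(|W|,|B|)$ is automatic because every edge joins a white to a black point, so only the lower bound $\nu\ge\min(|W|,|B|)$ must be established.

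To build the saturating matching I would induct on the number of rows, peeling off the top row $y=h$ of length $\ell_h$. By the monotonicity $\ell_{h-1}\ge\ell_h$, every top-row point $(x,h)$ has the point $(x,h-1)$ directly below it in the smaller diagram $R'$, so the new row can always be attached downward; starting from a minority-saturating matching of $R'$ supplied by the inductive hypothesis, I would extend it across the new row and rebalance via augmenting paths, tracking how the parity counts of $W$ and $B$ change as a row of length $\ell_h$ is added. An alternative route is to verify Hall's condition directly for the minority side, again using that row lengths (and column heights) are weakly decreasing. The main obstacle is precisely here: naive pairings inside rows or columns leave uncovered points strewn along the jagged staircase boundary, and these leftovers can a priori occur in \emph{both} colors, which would give only $\nu<\min(|W|,|B|)$. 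The heart of the proof is therefore to exploit the monotone Young-diagram boundary to show that any such color imbalance can always be pushed off to a single (majority) color — i.e.\ that no Hall violator for the minority class exists — and making that rebalancing argument watertight is where I expect the real work to lie.
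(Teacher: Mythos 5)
You have not proved the corollary; you have restated it. The bookkeeping in your reduction is correct: the grid graph $G$ on the staircase region is bipartite under the checkerboard coloring, $f(\{p,q\},t)$ is its independence number $\alpha(G)$, the majority color class gives $\alpha(G)\ge\max(|A_0|,|A_1|)$, and an independent set meets each edge of a matching $M$ in at most one point, so $\alpha(G)\le t-|M|$; hence a matching saturating the minority class would finish the proof. But in a bipartite graph K\"onig's theorem gives $\alpha(G)=t-\nu(G)$ exactly, so the assertion ``$\nu(G)=\min(|A_0|,|A_1|)$'' is not an auxiliary lemma on the way to the corollary --- it is logically equivalent to it. All of the difficulty (this was Conjecture~1 of \cite{Chen}) is therefore concentrated in the one step you leave open: constructing the minority-saturating matching, equivalently ruling out Hall violators. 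Your two sketches (peel off the top row and ``rebalance via augmenting paths''; verify Hall's condition using weakly decreasing row lengths) are strategies, not arguments, and you say yourself that making the rebalancing watertight is ``where I expect the real work to lie.'' That unfinished step is the entire theorem, so the proposal is a reformulation, not a proof.

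A further caution if you pursue this route: your framing uses only that the region is a Young diagram (weakly decreasing row lengths), whereas the paper's proof of Theorem~\ref{thm6} uses more, namely that the region is cut off by a single line. The line-cut property guarantees that every diagonal $x+y=n$ meets the region in a contiguous segment anchored at a coordinate axis, and this is what drives the paper's argument: starting from an optimal independent set, it inducts on diagonals and shifts the points of the set left, down, or up (three cases, according to whether the diagonal crosses the hypotenuse and whether the whole diagonal lies in the set), making the set monochromatic without ever creating adjacencies. For a general Young diagram, say with rows of lengths $3,1,1$, a diagonal can meet the region in a disconnected set, so an induction invoking only row monotonicity is attempting a formally stronger statement than the corollary and cannot borrow the paper's case analysis; you would need either to prove Hall's condition for that wider class of shapes or to build the line-cut geometry into your rebalancing step.
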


Corollary~\ref{cor2} is Conjecture~1 from \cite{Chen}, and, thus, is proved.

Theorem~\ref{thhm5} immediately follows from Theorem~\ref{thm2} and 
Corollary~\ref{cor2}.

Also, the above arguments give the following corollary of 
Theorem~\ref{thm6}.

\begin{cor} If $1<p<q$ and $\gcd(p,q)=1$, $N\in\N$, then
the maximal cardinality of an $A$-quotient-free subset of $[N]$ is
$$\sum_{\substack{n\le N,\\p\nmid n,q\nmid n}}
\max\left(|A_0(p,q,t(N/n))|,|A_1(p,q,t(N/n))|\right),$$
where $t(u)$ for $u\ge1$ is defined by $m_{t(u)} \le u < m_{t(u)+1}$.
\end{cor}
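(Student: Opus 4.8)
The plan is to assemble the equivalence-class reduction already sketched in the text with Theorem~\ref{thm6} (equivalently Corollary~\ref{cor2}); the corollary is essentially a bookkeeping statement, and almost all of its mathematical content is outsourced to Theorem~\ref{thm6}. First I would recall the relation $i\sim j\iff i/j=p^xq^y$ for some $x,y\in\Z$, whose classes are the orbits of the multiplicative group $\langle p,q\rangle\subset\mathbb{Q}^*$. A short verification, and the one place coprimality is genuinely used, shows that each class contains a \emph{unique} representative $n$ with $p\nmid n$ and $q\nmid n$, and that the class is exactly $\{n\,p^xq^y:\,x,y\ge0\}$: if $z_1/z_2=p^aq^b$ with $p\nmid z_1,z_2$ and $q\nmid z_1,z_2$, then $\gcd(p,q)=1$ forces $a=b=0$ (a positive exponent on $p$ would make $p\mid z_1$, a negative one $p\mid z_2$, and similarly for $q$), and the same argument rules out negative exponents in $n\,p^xq^y$. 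Since the smallest element of a class is its representative $n$, the classes meeting $[N]$ are exactly those with $n\le N$, $p\nmid n$, $q\nmid n$.

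Next I would observe that the $A$-quotient-free constraint decouples across classes: for $i,j$ in distinct classes, $i/j\notin\langle p,q\rangle\supseteq A$, so the forbidden ratios $p,q$ can only occur within a single class. Hence $S\subseteq[N]$ is $A$-quotient-free iff $S\cap C$ is $A$-quotient-free for every class $C$, and the maximal cardinality of an $A$-quotient-free subset of $[N]$ is the sum over classes of the per-class maxima. Fixing a representative $n$, the elements of its class lying in $[N]$ are $\{n\,p^xq^y:\,p^xq^y\le N/n\}$, in bijection with the lattice points $(x,y)$, $x,y\ge0$, satisfying $(\ln p)x+(\ln q)y\le\ln(N/n)$, i.e. the points of the triangle $\triangle_c$ with $a=\ln p$, $b=\ln q$, $c=\ln(N/n)$. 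Under this bijection the ratio of two class elements is $p^{x_1-x_2}q^{y_1-y_2}$, which lies in $A=\{p,q\}$ precisely when the two points are horizontally or vertically adjacent; so the maximal $A$-quotient-free subset of this class has size equal to the maximum number of pairwise non-adjacent lattice points of $\triangle_c$, namely $f(\{p,q\},t)$ with $t$ the number of lattice points of $\triangle_c$.

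I would then invoke Theorem~\ref{thm6} (equivalently Corollary~\ref{cor2}) to replace this maximum by $\max(|A_0(p,q,t)|,|A_1(p,q,t)|)$, using the text's identification of $|A_0|$ and $|A_1|$ with the white and black point counts of $\triangle_c$. The last link is to identify the abstract count $t$ with $t(N/n)$: the order-preserving bijection $(x,y)\mapsto p^xq^y$ sends the lattice points of $\triangle_c$ to exactly the numbers $m_i=p^{u_{i1}}q^{u_{i2}}\le N/n$, so $t=|M\cap[N/n]|=t(N/n)$ by the definition $m_{t(u)}\le u<m_{t(u)+1}$. Substituting $t=t(N/n)$ into the per-class maximum and summing over all representatives $n\le N$ with $p\nmid n$, $q\nmid n$ then gives exactly the asserted formula.

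The main work here is not an obstacle so much as care in the two matching steps: confirming uniqueness of the representative and nonnegativity of the exponents (where $\gcd(p,q)=1$ is essential), and confirming that the exponent-parity definition of $A_j(p,q,t)$ really does count the white/black points of $\triangle_c$ under the bijection $(x,y)\mapsto p^xq^y$, so that the index $t$ appearing in $f$ and in $A_j$ coincides with $t(N/n)$. Once these identifications are pinned down, the result is immediate from Theorem~\ref{thm6} and the class decomposition, with no further computation required.
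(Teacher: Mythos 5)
Your proposal is correct and is essentially the paper's own argument: the paper proves this corollary by exactly the equivalence-class reduction sketched in Subsection~\ref{sec:newres} (unique representative $n$ coprime to $p$ and $q$, decoupling of the quotient-free constraint across classes, per-class reduction to non-adjacent lattice points in $\triangle_c$) combined with Theorem~\ref{thm6}/Corollary~\ref{cor2}. Your write-up just makes explicit the bookkeeping steps (uniqueness of representatives, nonnegativity of exponents, and the identification $t=t(N/n)$) that the paper leaves implicit.
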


It was proved in \cite{Chen} that there are infinitely many sets 
$A\subset\N$ such that $\ol\rho(A)>\rho(A)$. We have strengthened that 
result as follows.

\begin{thm}\label{thm7} If $r\ge2$, $A = \{a_1, a_2, \ldots, a_r\}\subset\N$, 
where $1 < a_1 < a_2 < \cdots < a_r$ and $a_i$ are pairwise coprime, then
$\ol\rho(A)>\rho(A)$. 
\end{thm}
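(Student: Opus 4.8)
The plan is to combine the exact value of $\rho(A)$ from Theorem~\ref{thm4} with the lower bound $\ol\rho(A)\ge\ol\sigma(A)$ from Corollary~\ref{cor1}, and to prove the strict inequality $\ol\sigma(A)>\rho(A)$; since $\ol\rho(A)\ge\ol\sigma(A)$, this already yields $\ol\rho(A)>\rho(A)$. Write $m_s=\dsprod_{i=1}^r a_i^{u_{si}}$, let $\sigma(m_s)=u_{s1}+\dots+u_{sr}$ be its total exponent, and set
\[
D(t)=|A_0(t)|-|A_1(t)|=\dssum_{s=1}^t(-1)^{\sigma(m_s)},
\]
so that $\max(|A_0(t)|,|A_1(t)|)=\tfrac12\big(t+|D(t)|\big)$.

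The heart of the argument is a pair of telescoping identities, each evaluated by factoring the resulting geometric series over the coprime $a_i$. Abel summation gives $\dssum_{t\ge1}t\big(\tfrac1{m_t}-\tfrac1{m_{t+1}}\big)=\dssum_{s\ge1}\tfrac1{m_s}=\dsprod_i\tfrac{a_i}{a_i-1}$ and $\dssum_{t\ge1}D(t)\big(\tfrac1{m_t}-\tfrac1{m_{t+1}}\big)=\dssum_{s\ge1}\tfrac{(-1)^{\sigma(m_s)}}{m_s}=\dsprod_i\tfrac{a_i}{a_i+1}$. Substituting the first identity into the definition of $\ol\sigma(A)$ shows that the balanced part contributes exactly $\tfrac12$, so that
\[
\ol\sigma(A)=\frac12+\frac12\dsprod_{i=1}^r\Big(1-\frac1{a_i}\Big)\dssum_{t\ge1}|D(t)|\Big(\frac1{m_t}-\frac1{m_{t+1}}\Big),
\]
while the second identity rewrites the value of Theorem~\ref{thm4} as $\rho(A)=\tfrac12+\tfrac12\dsprod_i(1-\tfrac1{a_i})\dssum_{t\ge1}D(t)\big(\tfrac1{m_t}-\tfrac1{m_{t+1}}\big)$. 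Subtracting,
\[
\ol\sigma(A)-\rho(A)=\frac12\dsprod_{i=1}^r\Big(1-\frac1{a_i}\Big)\dssum_{t\ge1}\big(|D(t)|-D(t)\big)\Big(\frac1{m_t}-\frac1{m_{t+1}}\Big).
\]
Every summand is nonnegative, and since the weights $\tfrac1{m_t}-\tfrac1{m_{t+1}}$ are strictly positive, the whole expression is strictly positive precisely when $D(t)<0$ for at least one $t$. Thus the theorem reduces to the combinatorial claim that some initial segment of $M$ contains strictly more black than white points.

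To establish this claim I would examine the smooth numbers up to $a_2$, which is where the hypothesis $r\ge2$ enters. Let $m=\lfloor\log_{a_1}a_2\rfloor\ge1$. Since every product $a_1^{u_1}\dots a_r^{u_r}\le a_2$ involving a factor $a_j$ with $j\ge2$ must equal $a_2$ itself ($a_2$ is coprime to $a_1$, and the remaining such products exceed $a_2$), the smooth numbers not exceeding $a_2$ are exactly $1,a_1,\dots,a_1^m$ together with $a_2$. Among $1,\dots,a_1^m$ one has $D=1$ when $m$ is even and $D=0$ when $m$ is odd; appending $a_2$, whose total exponent is odd, gives $D=0$ or $D=-1$ respectively, which settles the odd case. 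When $m$ is even, the immediate successor of $a_2$ in $M$ is the minimum of $a_1^{m+1}$ and the primes $a_j>a_2$ (every other smooth number exceeds $a_1a_2>a_1^{m+1}$), and each such candidate has odd total exponent, so appending it drops $D$ from $0$ to $-1$. In either case $D(t)<0$ for a suitable $t$, completing the proof. The main obstacle is not the telescoping bookkeeping but verifying this combinatorial claim uniformly in $r$ and in the relative sizes of the $a_i$: one must rule out any even-parity smooth number slipping in between $a_1^m$, $a_2$, and the next element of $M$, which is exactly what the comparisons $a_1^{m+1}<a_1a_2$ and $a_1^{m+1}<a_2^2$ guarantee.
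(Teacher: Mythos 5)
Your proposal is correct and essentially coincides with the paper's own proof: the paper likewise plays Corollary~\ref{cor1} against an expression of $\rho(A)$ through the white-point counts $|A_0(t)|$ (equation~(\ref{another}) combined with Lemma~\ref{white}; your telescoping identities recover the same comparison starting instead from the closed form in Theorem~\ref{thm4}), reducing the claim to the existence of some $t$ with $|A_1(t)|>|A_0(t)|$, i.e.\ your condition $D(t)<0$. Your concluding combinatorial argument is exactly the paper's proof of Theorem~\ref{thm8} in multiplicative language --- the simplex with $c=\ln a_2$ contains precisely the points corresponding to $1,a_1,\dots,a_1^m,a_2$, and the enlargement to $c_1=(m+1)\ln a_1$ corresponds to your appending the successor of $a_2$ --- except that your parity bookkeeping (tie when $m=\lfloor\log_{a_1}a_2\rfloor$ is even, black ahead when $m$ is odd) is the corrected version of the paper's, whose proof of Theorem~\ref{thm8} inadvertently swaps the two cases.
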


The proof of Theorem~\ref{thm7} is based on the following result.
Suppose we have a plane $\alpha_1x_1 + \alpha_2x_2 + \cdots + \alpha_rx_r = c$ 
in an $r$-dimensional space and 
$\alpha_1 < \alpha_2 < \cdots < \alpha_r$. We color the space in the checkerboard 
order, so that the origin is white.

\begin{thm}\label{thm8}
If $\alpha_1/\alpha_2$ is irrational, then there exists $c > 0$ such that the number of black points in the simplex 
formed by the plane and the coordinate hyper-planes is greater than the number of 
white points.
\end{thm}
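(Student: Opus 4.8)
The plan is to study the signed counting function
$$D(c) = W(c) - B(c) = \sum_{\substack{x \in \Z_{\ge 0}^r \\ \alpha_1 x_1 + \cdots + \alpha_r x_r \le c}} (-1)^{x_1 + \cdots + x_r},$$
where $W(c)$ (resp.\ $B(c)$) is the number of white (resp.\ black) lattice points in the simplex, and to produce a $c > 0$ with $D(c) < 0$. A preliminary sanity check frames the difficulty. The generating identity $\dssum_{x \in \Z_{\ge 0}^r} (-1)^{|x|} e^{-s\langle\alpha, x\rangle} = \dsprod_{i=1}^r (1 + e^{-s\alpha_i})^{-1}$ shows that the Abel mean of $D$ equals $2^{-r} > 0$, so $D$ is positive on average. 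Thus the theorem is a statement about the downward fluctuations of $D$, and the irrationality of $\alpha_1/\alpha_2$ must enter precisely to force these below zero; indeed, for $\alpha_1/\alpha_2$ rational one checks that the fluctuations stay bounded and $D \ge 0$.

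The core is the case $r = 2$. Summing first over $x_1$ and using $\dssum_{x_1=0}^{L}(-1)^{x_1} = \tfrac12(1 + (-1)^{L})$ with $L(y) = \lfloor (c - \alpha_2 y)/\alpha_1\rfloor$, I obtain
$$D(c) = \tfrac12\,D'(c) + \tfrac12\,T(c), \qquad D'(c) = \dssum_{y=0}^{Y}(-1)^y \in \{0,1\}, \quad T(c) = \dssum_{y=0}^{Y}(-1)^{y + L(y)},$$
where $Y = \lfloor c/\alpha_2\rfloor$. Since $D(c)$ is an integer and $D'(c) \le 1$, it suffices to find $c$ with $T(c) \le -2$. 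The key simplification is that, because $y$ is an integer, $y + L(y) = \lfloor c/\alpha_1 + \theta y\rfloor$ with $\theta = 1 - \alpha_2/\alpha_1$; hence $T(c) = \dssum_{y=0}^{Y}\psi(c/\alpha_1 + \theta y)$, where $\psi(u) = (-1)^{\lfloor u\rfloor}$ is the mean-zero square wave of period $2$. Thus $T(c)$ is a Birkhoff sum of $\psi$ along the rotation by $\theta$, which is irrational exactly when $\alpha_1/\alpha_2$ is. The classical fact that the half-interval indicator (equivalently $\psi$) is not an $L^2$-coboundary for an irrational rotation makes these Birkhoff sums unbounded, and the symmetry $\psi(u+1) = -\psi(u)$ makes them unbounded below; so $T \le -2$ is attained and $D(c) \le -1 < 0$.

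One technical point, which I expect to be the first real obstacle, is that the truncation $Y = \lfloor c/\alpha_2\rfloor$ and the phase $c/\alpha_1$ are both governed by the single parameter $c$, so the pair cannot be chosen freely. I would handle this by first locating, from the unconstrained Birkhoff estimate, a length $Y_0$ and a phase realizing a sum below any prescribed negative level, and then using equidistribution of $c/\alpha_1 \bmod 2$ as $c$ ranges over $[Y_0\alpha_2, (Y_0+1)\alpha_2)$ (over which $c/\alpha_1$ sweeps an interval of length $\alpha_2/\alpha_1 > 1$) to select an admissible $c$ with the same $Y_0$ and a phase in the good range, absorbing the bounded discrepancy.

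The genuinely hard part is general $r$. Peeling $x_1$ again gives $D = \tfrac12 D'_r + \tfrac12 T_r$ with $T_r(c) = \dssum_{\mathbf m \ge 0,\ \langle\alpha',\mathbf m\rangle \le c}(-1)^{|\mathbf m|}\psi\big((c - \langle\alpha',\mathbf m\rangle)/\alpha_1\big)$ and $\alpha' = (\alpha_2,\dots,\alpha_r)$, but now the lower-dimensional defect $D'_r$ is itself uncontrolled, so the clean reduction breaks. I see two routes. The first is a confinement trick: if $c < \alpha_3$ then $x_3 = \cdots = x_r = 0$ is forced and $D_r(c) = D_2(c)$, so a witness for the two-variable problem lying below $\alpha_3$ finishes the argument; this works only when such a small witness exists and must be supplemented. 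The second, uniform route is to expand $D$ through $\dsprod_i (1 + e^{-s\alpha_i})^{-1}$: the poles on the imaginary axis coming from the factor $i = 1$ have finite residues (here the irrationality of $\alpha_2/\alpha_1$ is exactly what prevents the factors $i \ge 2$ from contributing a coincident pole), and summing them presents $T_r$ as a genuine mean-zero almost-periodic function of $c$. The main obstacle is then to prove that this multi-frequency function is unbounded below --- equivalently, that the $x_1$-direction square-wave defect, averaged against the signed comb in the remaining coordinates, is not a coboundary. I expect this to follow from the $r=2$ unboundedness together with an equidistribution argument showing the extra coordinates cannot systematically cancel the two-dimensional fluctuation, but controlling that cancellation uniformly in $c$ is the crux.
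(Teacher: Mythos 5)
There is a genuine gap: your argument is not a proof for $r\ge 3$, and you say so yourself --- both of your proposed routes for general $r$ are left unfinished (``must be supplemented'', ``the crux''). The confinement trick does not follow from your $r=2$ work, because the Birkhoff-sum argument gives no control whatsoever on the size of the witness $c$: unboundedness of ergodic sums is a fluctuation phenomenon with no effective bound, so nothing guarantees a witness below $\alpha_3$ (which may be arbitrarily close to $\alpha_2$). The second route is a research plan, not an argument. Even in the $r=2$ core, the coupling between the truncation $Y=\lfloor c/\alpha_2\rfloor$ and the phase $c/\alpha_1$ is only sketched: the phase sweep over $[Y_0\alpha_2,(Y_0+1)\alpha_2)$ has length $\alpha_2/\alpha_1$, which can be smaller than the period $2$ of $\psi$, so the good phase need not be achievable for the given $Y_0$; moreover the windowed sum is a step function of the phase, so ``absorbing the bounded discrepancy'' is not automatic and needs real work.

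The underlying miscalculation is strategic: your Abel-mean sanity check convinced you that you must capture rare downward fluctuations of $D$, but the theorem asks only for a single $c$ with a deficit of one, and this can be arranged with $c$ small, where the simplex is essentially one-dimensional. That is exactly what the paper does: take $c=\alpha_2$. Then any lattice point of the simplex with $x_1>0$ has all other coordinates zero, and the only point with some $x_i>0$, $i\ge2$, is $(0,1,0,\dots,0)$, which is black. So the simplex consists of the $\lfloor\alpha_2/\alpha_1\rfloor+1$ alternating points on the $x_1$-axis plus one extra black point; depending on the parity of $\lfloor\alpha_2/\alpha_1\rfloor$, black either already wins or ties. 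In the tie case, enlarge $c$ to $c_1=\alpha_1\left(\lfloor\alpha_2/\alpha_1\rfloor+1\right)$: irrationality of $\alpha_2/\alpha_1$ gives the strict inequalities $\alpha_1\lfloor\alpha_2/\alpha_1\rfloor<\alpha_2<c_1<\alpha_1+\alpha_2<2\alpha_2$, so the only points that enter are the next axis point (whose coordinate sum is odd in the tie case, hence black) and possibly some unit vectors $e_i$, $i\ge3$, which are also black. Hence black strictly outnumbers white at $c_1$. This finite, explicit construction needs no ergodic input and handles all $r$ at once; your $r=2$ route, if completed, would prove a stronger statement (arbitrarily large deficits for suitable $c$), but as it stands it does not deliver the theorem.
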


\section{The Proof of Theorem~\ref{thm3} and Theorem~\ref{thm4}}
\label{sec:dens}

\subsection{An upper estimate}\label{sec:upper}

For a set $A=\{a_1,\dots,a_r\}$ of positive rational numbers 
we can find a set $B=\{b_1,\dots,b_s\}$ of pairwise coprime
positive integers so that each element $a_i\in A$ has a representation
$$a_i=\dsprod_{j=1}^s b_j^{u_{ij}},\quad u_{ij}\in\Z\,
(i\in [r],\,j\in [s]).$$
For example, one can write all elements $a_i$ as irreducible fractions and
take $B$ as the set of all prime divisors of all numerators and denominators.

We will use the following lemmas.

\begin{lemma}\label{lemma1} Let $M(B)$ be the set of integers
of the form $b_1^{x_1}\dots b_s^{x_s}$, $x_j\in\Z_+\,(1\le j\le s)$
and $N(B)$ be the set of integers $n\ge1$ with $b_j\not| n$
$(1\le j\le s)$. Then
\begin{enumerate}[(i)]
\item every positive integer $k$ can be represented uniquely as $k= mn$, 
$m\in M(B)$ and $n\in N(B)$;
\item if $(mn)/(m'n')=b_1^{u_1}\dots b_s^{u_s}$, $u_j\in\Z\,(j=1,\dots,s)$, 
$m,m'\in M(B)$ and $n,n'\in N(B)$, then $n=n'$.
\end{enumerate}
\end{lemma}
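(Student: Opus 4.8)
The plan is to derive both parts from a single elementary observation: because the $b_j$ are pairwise coprime, each $b_j$ is coprime to every product $\prod_{i\neq j} b_i^{x_i}$, and therefore the cancellation property ``$\gcd(b,c)=1$ and $b\mid cn$ imply $b\mid n$'' is available throughout. For a positive integer $k$ and each $j$ I would let $v_j(k)$ denote the largest exponent $x\in\Z_+$ with $b_j^{x}\mid k$. This valuation is the right bookkeeping device even though the $b_j$ are not assumed prime.

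For part (i), I would establish existence and uniqueness together. For existence, given $k$ put $x_j=v_j(k)$ and $m=\prod_j b_j^{x_j}$; since the $b_j^{x_j}$ are pairwise coprime and each divides $k$, their product $m$ divides $k$, so $n=k/m$ is a positive integer, and $m\in M(B)$. If $b_j\mid n$ for some $j$, then $b_j^{x_j+1}\mid b_j m\mid k$, contradicting $x_j=v_j(k)$; hence $n\in N(B)$. For uniqueness, suppose $k=mn=m'n'$ with $m=\prod_j b_j^{x_j}$, $m'=\prod_j b_j^{x_j'}$ in $M(B)$ and $n,n'\in N(B)$. I would show $x_j=v_j(k)$ for every $j$. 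Clearly $b_j^{x_j}\mid k$, so $x_j\le v_j(k)$; conversely, if $b_j^{x_j+1}\mid k=b_j^{x_j}\prod_{i\neq j}b_i^{x_i}\,n$, then $b_j\mid \prod_{i\neq j}b_i^{x_i}\,n$, and since $b_j$ is coprime to $\prod_{i\neq j}b_i^{x_i}$, cancellation forces $b_j\mid n$, contradicting $n\in N(B)$. Thus $x_j=v_j(k)$, and symmetrically $x_j'=v_j(k)$, so $m=m'$ and then $n=n'$.

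For part (ii), writing $m=\prod_j b_j^{x_j}$ and $m'=\prod_j b_j^{x_j'}$, the hypothesis $(mn)/(m'n')=\prod_j b_j^{u_j}$ rearranges to $n/n'=\prod_j b_j^{w_j}$ with integer exponents $w_j=x_j'+u_j-x_j$. Clearing denominators gives $n\cdot\prod_{w_j<0}b_j^{-w_j}=n'\cdot\prod_{w_j>0}b_j^{w_j}$. If some $w_{j_0}\neq0$, then after possibly interchanging the roles of $n$ and $n'$ I may assume $w_{j_0}>0$, so $b_{j_0}$ divides the right-hand side and hence the left-hand side; since the product on the left runs only over indices $\neq j_0$, each coprime to $b_{j_0}$, cancellation yields $b_{j_0}\mid n$, contradicting $n\in N(B)$. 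Therefore all $w_j=0$ and $n=n'$.

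The step that most needs care is that the $b_j$ are assumed only pairwise coprime, not prime, so one may not invoke unique prime factorization nor conclude $\gcd(b_j,n)=1$ from $b_j\nmid n$. Every divisibility deduction must instead be routed through the cancellation property for coprime integers together with the fact that each $b_j$ is coprime to any product of powers of the remaining generators; this is the single point on which both parts turn, and once it is isolated the rest is routine.
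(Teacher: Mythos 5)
Your proof is correct. Note, however, that the paper does not actually prove this lemma: part (i) is quoted as part (i) of Lemma~1 in \cite{Chen}, and part (ii) is dismissed with ``easily follows from part (i).'' So your write-up supplies an argument the paper outsources, and it uses the right tool: since the $b_j$ are only pairwise coprime, not prime, every divisibility step must be routed through the cancellation property ($\gcd(b,c)=1$ and $b\mid cn$ imply $b\mid n$), exactly as you isolate; your valuations $v_j(k)$ make the uniqueness in (i) clean. The one structural difference is in part (ii): you prove it from scratch, repeating the cancellation argument, whereas the derivation the paper intends is shorter---clear denominators in $(mn)/(m'n')=\prod_j b_j^{u_j}$ to get $mn\prod_{u_j<0}b_j^{-u_j}=m'n'\prod_{u_j>0}b_j^{u_j}$; each side is then a product of an element of $M(B)$ with an element of $N(B)$, so the uniqueness statement of part (i) applied to this common integer yields $n=n'$ immediately. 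Your direct route costs a few extra lines but is equally valid. One hypothesis you should make explicit is that each $b_j\ge 2$ (which the paper's construction of $B$ guarantees): otherwise your $v_j$ are undefined and $N(B)$ is empty, so both the lemma and your proof would degenerate.
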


Part (i) of Lemma~\ref{lemma1} is part (i) of Lemma 1 from \cite{Chen}. 
Part (ii) easily follows from part (i).

Let
$$\vp(B) = \dsprod_{j=1}^s\left(1-\frac1{b_j}\right).$$

\begin{lemma}\label{lemma2} For any $X\in\N$ we have
$$||N(B)\cap[X]| - \vp(B)X|< 2^s.$$
\end{lemma}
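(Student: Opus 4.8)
The plan is to compute $|N(B)\cap[X]|$ exactly by inclusion--exclusion and then read off both the leading term $\vp(B)X$ and an error term controlled by fractional parts. For a subset $T\subseteq[s]$ write $d_T=\dsprod_{j\in T}b_j$, with the convention $d_\emptyset=1$. Since the $b_j$ are pairwise coprime, an integer is divisible by every $b_j$ with $j\in T$ if and only if it is divisible by $d_T$; hence the number of such integers in $[X]$ is exactly $\lfloor X/d_T\rfloor$. Inclusion--exclusion therefore gives
$$|N(B)\cap[X]|=\dssum_{T\subseteq[s]}(-1)^{|T|}\left\lfloor\frac{X}{d_T}\right\rfloor.$$

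Next I would separate each floor into its main and fractional parts, $\lfloor X/d_T\rfloor = X/d_T - \{X/d_T\}$. The main terms recombine multiplicatively:
$$\dssum_{T\subseteq[s]}(-1)^{|T|}\frac{X}{d_T}=X\dsprod_{j=1}^s\left(1-\frac1{b_j}\right)=\vp(B)X,$$
which is exactly the claimed leading term. Consequently
$$|N(B)\cap[X]|-\vp(B)X=-\dssum_{T\subseteq[s]}(-1)^{|T|}\left\{\frac{X}{d_T}\right\},$$
and the whole problem reduces to bounding this signed sum of fractional parts.

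For the bound I would first note that the term $T=\emptyset$ contributes $\{X\}=0$ and so drops out. Grouping the remaining $2^s-1$ terms by the parity of $|T|$, and using $0\le\{X/d_T\}<1$, the contribution of the even-sized nonempty subsets lies in $[0,2^{s-1}-1)$ while the contribution of the odd-sized subsets lies in $[0,2^{s-1})$; since there are exactly $2^{s-1}$ subsets of each parity, the signed sum lies in the open interval $(-2^{s-1},\,2^{s-1})$. Hence its absolute value is strictly less than $2^{s-1}\le 2^s$, which proves the lemma, in fact with room to spare.

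There is essentially no hard obstacle here; the argument is routine inclusion--exclusion. The only point demanding care is the very first step, where pairwise coprimality is used to identify the count of common multiples in $[X]$ with $\lfloor X/d_T\rfloor$: without coprimality one would have to replace $d_T$ by the least common multiple, and the clean factorization of the main term into $\vp(B)X$ would break down. The strictness of the final inequality is automatic, since each fractional part is $<1$ and the empty-set term vanishes.
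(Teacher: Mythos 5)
Your proof is correct and follows essentially the same route as the paper: inclusion--exclusion over subsets of $\{b_1,\dots,b_s\}$, then bounding the error by the fractional parts $0\le\{X/d_T\}<1$. Your parity bookkeeping (dropping the $T=\emptyset$ term and pairing even against odd subsets) is a minor refinement that sharpens the constant to $2^{s-1}$, but the underlying argument is identical to the paper's.
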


\begin{proof} Since $b_1,\dots,b_s$ are pairwise coprime,
for any $1 \leq j_1<\dots<j_{\nu} \leq s$ the number of $n\le X$, divisible by 
all $b_{j_1},\dots,b_{j_{\nu}}$, is 
$$\left \lfloor \frac X{b_{j_1}\dots b_{j_\nu}}\right \rfloor.$$
Lemma follows form the inclusion-exclusion formula

$$|N(B)\cap[X]|=\dssum_{\nu=0}^s
(-1)^{\nu}\sum_{j_1<\dots<j_{\nu}}
\left \lfloor \frac X{b_{j_1}\dots b_{j_\nu}}\right \rfloor$$
and the inequalities
$$0 \le \frac X{b_{j_1}\dots b_{j_\nu}}
- \left \lfloor \frac X{b_{j_1}\dots b_{j_\nu}}\right \rfloor <1.$$
\end{proof}

\begin{lemma}\label{lemma3} We have
$$\dssum_{n\le X, n\in N(B)}\frac1n = \vp(B)\ln X +O(1).$$
\end{lemma}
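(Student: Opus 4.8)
The plan is to reduce the reciprocal sum to the counting function already controlled by Lemma~\ref{lemma2}, using partial (Abel) summation. Set $C(t)=|N(B)\cap[t]|$. Applying Abel summation with the weight $f(t)=1/t$ (so that $f'(t)=-1/t^2$) gives the standard identity
$$\dssum_{\substack{n\le X\\ n\in N(B)}}\frac1n=\frac{C(X)}{X}+\int_1^X\frac{C(t)}{t^2}\,dt.$$
This converts the arithmetic sum into an integral of the counting function, which is exactly the object estimated in the previous lemma.

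Next I would substitute the estimate from Lemma~\ref{lemma2}, writing $C(t)=\vp(B)t+E(t)$ where $|E(t)|<2^s$ holds \emph{uniformly} in $t$. The boundary term is then $C(X)/X=\vp(B)+O(1/X)=O(1)$, and the integral splits as
$$\int_1^X\frac{C(t)}{t^2}\,dt=\vp(B)\int_1^X\frac{dt}{t}+\int_1^X\frac{E(t)}{t^2}\,dt=\vp(B)\ln X+\int_1^X\frac{E(t)}{t^2}\,dt.$$
The first piece produces the claimed main term $\vp(B)\ln X$, and the error integral is dominated by $2^s\int_1^X t^{-2}\,dt<2^s$, hence is $O(1)$. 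Combining the two $O(1)$ contributions yields the assertion.

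There is no serious obstacle here; the single point requiring care is that the error bound $2^s$ in Lemma~\ref{lemma2} is uniform in the argument, which is precisely what makes the error integral converge by comparison with $\int_1^\infty t^{-2}\,dt$. As an alternative avoiding Abel summation, one can mimic the inclusion-exclusion proof of Lemma~\ref{lemma2} directly on reciprocals: for a squarefree product $d=b_{j_1}\cdots b_{j_\nu}$ of distinct $b_j$'s, the sum $\sum_{n\le X,\,d\mid n}1/n$ equals $d^{-1}\sum_{k\le X/d}1/k=(\ln X)/d+O(1)$ by the harmonic asymptotic, and summing these with signs $(-1)^\nu$ recovers the coefficient $\dsprod_{j=1}^s(1-1/b_j)=\vp(B)$ in front of $\ln X$, with all remaining terms being $O(1)$ because $s$ is fixed and the sum has finitely many terms. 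Either route gives the same conclusion.
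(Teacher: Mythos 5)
Your proof is correct and takes essentially the same approach as the paper: both apply partial (Abel) summation to the counting function $F(t)=|N(B)\cap[t]|$ and then invoke Lemma~\ref{lemma2}, the only difference being that you use the integral form $\frac{F(X)}{X}+\int_1^X F(t)t^{-2}\,dt$ while the paper uses the discrete form $\sum_{x\le X-1}\frac{F(x)}{x(x+1)}+\frac{F(X)}{X}$, a purely cosmetic distinction. One pedantic remark: Lemma~\ref{lemma2} is stated for integer $X$, so for real $t$ the uniform error bound should be taken as $2^s+1$ (accounting for $\vp(B)(t-\lfloor t\rfloor)$), which changes nothing in the conclusion.
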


(Here and throughout the following, the implicit constants in $O$
depend only on $B$.)

\begin{proof} Define $F(x)=|N(B)\cap[x]|$. By partial summation,
we have
$$\dssum_{n\le X, n\in N(B)}\frac1n
=\dssum_{x\le X-1}\frac{F(x)}{x(x+1)}+\frac{F(X)}X,$$
and the result follows from Lemma~\ref{lemma2}.
\end{proof}

Denote
$$\ol u_i=\{u_{i_1},\dots,u_{i_s}\}\in\Z^s,\quad(i=1,\dots,r)$$
and 
$$\UU=\{\ol u_1,\dots,\ol u_r\}.$$
We say that a set $E\subset\Z_+^s$ is an $\UU$-difference free set
if no two elements of $E$ differ by an element of $\UU$.
For any set $S\subset\N$ and $n\in N(B)$ we can define
the set
$$E(S,n)=\left\{(u_1,\dots,u_s)\in\Z_+^s:\,
n\dsprod_{j=1}^s b_j^{u_j}\in S\right\}.$$
It is easy to set that the set $S$ is an $A$-quotient-free set 
if and only if for any $n$ the set $E(S,n)$ is an $\UU$-difference free 
set.
Define the magnitude
$$\gamma(A,B)=\sup\dssum_{(u_1,\dots,u_s)\in E}
\dsprod_{j=1}^s b_j^{-u_j}$$
where the supremum is taken over $\UU$-difference free sets $E$.
We will prove that all density limitations involved in Theorem~\ref{thm3}
are equal to $\vp(B)\gamma(A,B)$ (and, therefore, $\vp(B)\gamma(A,B)$ does not 
depend on the choice of $B$).

Now we are ready to prove an upper estimate for $\ol\rho_{\log}(A)$.

\begin{lemma}\label{upest} We have $\ol\rho_{\log}(A) \le \vp(B)\gamma(A,B)$.
\end{lemma}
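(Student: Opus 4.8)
The plan is to bound the logarithmic sum $\dssum_{k\in S,\,k\le X}1/k$ for an arbitrary $A$-quotient-free set $S$ directly in terms of $\gamma(A,B)$, and then divide by $\ln X$, pass to the limit superior, and take the supremum over $S$. The engine of the argument is the unique factorization of Lemma~\ref{lemma1}(i) together with the characterization that $E(S,n)$ is $\UU$-difference free for every $n$ whenever $S$ is $A$-quotient-free.

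First I would reorganize the sum according to the decomposition $k=mn$ with $m=\dsprod_{j=1}^s b_j^{u_j}\in M(B)$ and $n\in N(B)$. Since this representation is unique,
$$\dssum_{k\in S,\,k\le X}\frac1k=\dssum_{\substack{n\in N(B)\\ n\le X}}\frac1n\dssum_{\substack{(u_1,\dots,u_s)\in E(S,n)\\ n\prod_j b_j^{u_j}\le X}}\dsprod_{j=1}^s b_j^{-u_j},$$
where the outer range is restricted to $n\le X$ because $n$ divides $k\le X$.

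The second step bounds the inner sum. For each fixed $n$ the set $E(S,n)$ is $\UU$-difference free, so by the definition of $\gamma(A,B)$ the full series $\dssum_{(u_1,\dots,u_s)\in E(S,n)}\dsprod_{j=1}^s b_j^{-u_j}$ is at most $\gamma(A,B)$; since every term is positive, restricting to those tuples with $n\prod_j b_j^{u_j}\le X$ only decreases the value, so the inner sum is $\le\gamma(A,B)$ as well. (Here $\gamma(A,B)$ is finite, being bounded by $\dssum_{(u_1,\dots,u_s)\in\Z_+^s}\dsprod_{j=1}^s b_j^{-u_j}=1/\vp(B)$.) Consequently
$$\dssum_{k\in S,\,k\le X}\frac1k\le\gamma(A,B)\dssum_{\substack{n\in N(B)\\ n\le X}}\frac1n.$$

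Third, I would invoke Lemma~\ref{lemma3} to replace the remaining sum by $\vp(B)\ln X+O(1)$, obtaining $\dssum_{k\in S,\,k\le X}1/k\le\gamma(A,B)\bigl(\vp(B)\ln X+O(1)\bigr)$. Dividing by $\ln X$ and letting $X\to\infty$ yields $\ol\delta_{\log}(S)\le\vp(B)\gamma(A,B)$, and taking the supremum over all $A$-quotient-free sets $S$ gives the claim. I expect no serious obstacle: the only points requiring care are the interchange that turns the single sum over $k$ into the iterated sum over $n$ and $(u_1,\dots,u_s)$, which rests on the unique factorization of Lemma~\ref{lemma1}(i), and the observation that truncating a convergent series of positive terms preserves the bound by $\gamma(A,B)$.
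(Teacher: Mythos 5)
Your proposal is correct and follows essentially the same route as the paper: decompose each $k\in S$ as $k=mn$ with $m\in M(B)$, $n\in N(B)$, bound the contribution of each class $n$ by $\gamma(A,B)/n$ using the $\UU$-difference-free property of $E(S,n)$, and then apply Lemma~\ref{lemma3} and divide by $\ln X$. The only difference is cosmetic: you spell out the truncation argument (restricting the inner series to tuples with $n\prod_j b_j^{u_j}\le X$ only decreases a sum of positive terms) and note the finiteness $\gamma(A,B)\le 1/\vp(B)$, both of which the paper leaves implicit.
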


\begin{proof} Let $S$ be an $A$-quotient-free set. 
Take a large number $X$. We have
$$S=\bigcup_{n\le X, n\in N(B)} S_{X,n},$$
where
$$S_{X,n}=\{x\in S: x\le X, x=mn, m\in M(B)\}.$$
Since the set $E(S,n)$ is an $\UU$-difference free set, we get
$$\dssum_{x\in S_{X,n}}\frac1x \le \frac{\gamma(A,B)}n.$$
By Lemma~\ref{lemma3},
\begin{align*}
&\dssum_{x\in X, x\le X}\frac1x=\dssum_{n\le X, n\in N(B)} 
\dssum_{x\in S_{X,n}}\frac1x\le\gamma(A,B)
\dssum_{n\le X, n\in N(B)}\frac1n\\
&= \vp(B)\gamma(A,B) \ln X  +O(1),
\end{align*}
and the assertion follows.
\end{proof}

\subsection{A lower estimate}\label{sec:lower}

We use notation from the previous subsection.
First, we show that the supremum in the definition of $\gamma(A,B)$
is attained.

\begin{lemma}\label{existence} There exist an $\UU$-difference free set
$\hat E\subset\Z_+^s$ such that 
\be\label{attain}
\dssum_{(u_1,\dots,u_s)\in \hat E}
\dsprod_{j=1}^s b_j^{-u_j} = \gamma(A,B).
\ee
\end{lemma}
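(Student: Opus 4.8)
The plan is to realize $\gamma(A,B)$ as the maximum of a continuous functional on a compact space, and then to appeal to a standard compactness argument. First I would record that the objective is bounded: since every $b_j\ge 2$, the total weight of the whole lattice is finite,
$$\sum_{u\in\Z_+^s}\prod_{j=1}^s b_j^{-u_j}=\prod_{j=1}^s\frac{b_j}{b_j-1}<\infty,$$
so $\gamma(A,B)$ is the supremum of a bounded family and, crucially, the weight function $w(u)=\prod_{j=1}^s b_j^{-u_j}$ is summable over $\Z_+^s$.

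Next I would take a maximizing sequence of $\UU$-difference free sets $E_1,E_2,\dots$ with $\sum_{u\in E_k}w(u)\to\gamma(A,B)$. Since $\Z_+^s$ is countable, a diagonal extraction produces a subsequence (which I relabel $E_k$) along which the indicator value at each fixed point is eventually constant, i.e. for every $u\in\Z_+^s$ the sequence $\mathbf 1_{E_k}(u)$ stabilizes. I then define $\hat E=\{u\in\Z_+^s:\ \mathbf 1_{E_k}(u)=1\ \text{for all large }k\}$ as the resulting pointwise limit. Equivalently, identifying subsets of $\Z_+^s$ with points of the compact space $\{0,1\}^{\Z_+^s}$, one extracts a convergent subsequence by Tychonoff's theorem.

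It then remains to check that $\hat E$ inherits both relevant properties from the sequence. For the constraint: each forbidden configuration $u-u'\in\UU$ involves only the two coordinates $u,u'$, so if both $u,u'\in\hat E$ then both lie in $E_k$ for all large $k$, contradicting that $E_k$ is $\UU$-difference free; hence $\hat E$ is $\UU$-difference free. For the objective: since $\mathbf 1_{E_k}(u)\,w(u)\to\mathbf 1_{\hat E}(u)\,w(u)$ pointwise and is dominated by the summable function $w$, the dominated convergence theorem yields
$$\sum_{u\in\hat E}w(u)=\lim_{k\to\infty}\sum_{u\in E_k}w(u)=\gamma(A,B),$$
which is exactly (\ref{attain}).

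The argument is essentially routine; the only points requiring care are that both the $\UU$-difference free constraint and the weight functional are stable under pointwise (product-topology) limits. The former holds because each constraint is determined by finitely many coordinates, so no violation can appear in the limit without already appearing for large $k$; the latter holds because the weights are summable, which legitimizes passing the limit through the infinite sum via dominated convergence. No quantitative control on the optimal set is needed, only these stability and compactness facts.
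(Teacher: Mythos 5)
Your proof is correct and is essentially the paper's own argument: the paper identifies subsets of $\Z_+^s$ with points of the compact product space $\{0,1\}^{\Z_+^s}$ (via Tychonoff), observes that the $\UU$-difference-free constraint cuts out a closed subset and that the weight functional $F$ is continuous, and then invokes compactness to attain the supremum --- your diagonal extraction of a pointwise-convergent maximizing subsequence plus dominated convergence is exactly the sequential form of that same compactness argument, as you yourself note. The two stability facts you isolate (each constraint involves only finitely many coordinates; the weights are summable) are precisely what make the paper's constraint set closed and its functional $F$ continuous, so nothing is missing.
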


\begin{proof}
Let $\Omega$ be the product of topological spaces $\{0,1\}$:
$$\Omega=\{\{\omega_{\ol u}\}:\,\ol u\in\Z_+^s,\,\omega_{\ol u}\in\{0,1\}\}.$$
Every element $\omega\in\Omega$ defines a subset $E(\omega)\subset\Z_+^s$ as
$$E(\omega)=\{\ol u\in\Z^s:\,\omega_{\ol u}=1\}.$$
thus, we have a one-to-one correspondence between $\Omega$ and the set of 
subsets of $\Z_+^s$.

Define the set $W\subset\Z_+^s\times\Z_+^s$ as
$$W=\{(\ol u,\ol v):\,\exists i\in\{1,\dots,r\}\,\ol v-\ol u=\ol u_i \}.$$
Next, for $w=(\ol u,\ol v)\in W$ we define the subset
$\Lambda_w\subset\Omega$ as
$$\Lambda_w=\{\omega: \omega_{\ol u}\omega_{\ol v}=0\}.$$
Let
$$\Omega_0=\bigcap_{w\in W}\Lambda_w.$$
We notice that the set $E(\omega)$ is an $\UU$-difference free set
if and only if $\omega\in\Omega_0$.

By Tikhonov theorem (see, for example, \cite{Ku}, Appendice, Th\'eor\`eme~7), 
the space $\Omega$ is compact. Since all sets 
$\Lambda_w$ are closed subsets of $\Omega$, we conclude that $\Omega_0$ is 
also compact. We consider the following function $F:\Omega\to\R$: 
$$F(\omega)=\dssum_{\substack{(u_1,\dots,u_s)\in\Z_+^s,\\
\omega_{(u_1,\dots,u_s)}=1}}\dsprod_{j=1}^s b_j^{-u_j}.$$ 
It is easy to see that $F$ is continuous and that
$$\sup_{\omega\in\Omega_0}F(\omega) = \gamma(A,B).$$
Due to compactness of $\Omega_0$, the supremum is attained at some 
$\hat\omega\in\Omega_0$. For the set $\hat E= E(\hat\omega)$ 
equality~(\ref{attain}) holds. The proof of the lemma is complete.
\end{proof}

\begin{lemma}\label{lowest} There exists an $A$-quotient-free set
$S$ such that $\delta(S)=\vp(B)\gamma(A,B)$.
\end{lemma}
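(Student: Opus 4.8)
The plan is to construct the set $S$ explicitly from the optimal $\UU$-difference-free set $\hat E$ produced by Lemma \ref{existence}, and then verify that its asymptotic density equals $\vp(B)\gamma(A,B)$. First I would fix the set $\hat E \subset \Z_+^s$ satisfying (\ref{attain}), and define $S$ to be the set of all integers of the form $n\dsprod_{j=1}^s b_j^{u_j}$ where $n\in N(B)$ and $(u_1,\dots,u_s)\in\hat E$. By the characterization preceding Lemma \ref{upest}, since each slice $E(S,n)=\hat E$ is $\UU$-difference-free, the set $S$ is automatically $A$-quotient-free. The work then reduces to computing $\delta(S)$.

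The key step is the density computation. For this I would count $|S\cap[X]|$ by grouping according to the $M(B)$-component: writing $m(\ol u)=\dsprod_{j=1}^s b_j^{u_j}$, an element $n\,m(\ol u)$ lies in $[X]$ precisely when $n\le X/m(\ol u)$ and $n\in N(B)$, so that
\be\label{eq:Scount}
|S\cap[X]|=\dssum_{\ol u\in\hat E}|N(B)\cap[X/m(\ol u)]|.
\ee
By Lemma \ref{lemma2} each term is $\vp(B)\,X/m(\ol u)+O(1)$, so each term divided by $X$ tends to $\vp(B)/m(\ol u)=\vp(B)\dsprod_j b_j^{-u_j}$, and summing over $\ol u\in\hat E$ gives exactly $\vp(B)\gamma(A,B)$ by (\ref{attain}). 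The technical subtlety is that $\hat E$ may be infinite, so I cannot simply interchange the limit with an infinite sum of error terms; the main obstacle is controlling the tail uniformly in $X$.

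To handle the tail I would split $\hat E$ into a finite initial part $\hat E_T=\{\ol u\in\hat E:\,m(\ol u)\le T\}$ and the remainder. For the finite part the termwise limit argument applies directly using Lemma \ref{lemma2}, contributing $\vp(B)\dssum_{\ol u\in\hat E_T}\dsprod_j b_j^{-u_j}$ to the density. For the tail, I would bound the extra contribution crudely by $\dssum_{\ol u\in\hat E,\,m(\ol u)>T}X/m(\ol u)$, which is $o(X)$ relative to $X$ with error bounded by $\dssum_{m(\ol u)>T}m(\ol u)^{-1}$; this tail sum is the part of a convergent series (it is dominated by $\gamma(A,B)<\infty$), hence can be made arbitrarily small by taking $T$ large. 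Combining these, $\limsup$ and $\liminf$ of $|S\cap[X]|/X$ both lie within this arbitrarily small tail of $\vp(B)\dssum_{\ol u\in\hat E}\dsprod_j b_j^{-u_j}=\vp(B)\gamma(A,B)$, so the asymptotic density exists and equals $\vp(B)\gamma(A,B)$, completing the proof.
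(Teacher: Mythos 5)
Your proposal is correct and follows essentially the same route as the paper: the same construction of $S$ from the optimal set $\hat E$, the same reduction of $|S\cap[X]|$ to the fiber counts $\left|N(B)\cap\left[X\prod_{j} b_j^{-u_j}\right]\right|$ summed over $\ol u\in\hat E$ (which the paper reaches via partial summation in $t$, and you reach directly from the unique factorization in Lemma~\ref{lemma1}(i)), and the same application of Lemma~\ref{lemma2} to each fiber. The only difference is the tail management --- the paper truncates $\hat E$ at an $X$-dependent threshold $u_1+\dots+u_s\le\log_2 X$ (outside of which the fibers are empty) and tracks an $O((\ln X)^s)$ error, while you truncate at a fixed level $T$ and sandwich the upper and lower densities --- and both versions are sound.
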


\begin{proof} Due to Lemma~\ref{existence}, we take an $\UU$-difference free 
set $\hat E$ satisfying (\ref{attain}). We denote
$$M=M(B),\quad N=N(B).$$  
Let
$$S=\{mn: n\in N, m=b_1^{u_1}\dots b_s^{u_s}\in M, 
(u_1,\dots,u_s)\in\hat E\}.$$
For any $n\in N$ we have $E(S,n)=\hat E$. Hence,
$S$ an $A$-quotient-free set. It suffices to check that 
$\delta(S)=\vp(B)\gamma(A,B)$.

Let $M = \{m_1 < m_2 < \cdots\}$. Observe that $m_1=1$. 
Denote by $f_0(t)$ the number of $m\in\{m_1,\dots,m_t\}$
such that $m=b_1^{u_1}\dots b_s^{u_s}$, $(u_1,\dots,u_s)\in\hat E$.
Let $X\in\N$, $t\in\N$ and $S_t$ be the set of $mn\in S$,
$m\in M$, $n\in N$, $X/m_{t+1} < n \le X/m_t$. We have
\be\label{expdiff}
|S\cap[X]|=\dssum_{t=1}^\infty |S_t\cap[X]|
=\dssum_{t=1}^\infty f_0(t)\left(|N\cap[X/m_t]|-|N\cap[X/m_{t+1}]|\right).
\ee
(Notice that for $m_t>X$ all the summands are equal to $0$.)
By partial summation,
\begin{align*}
&|S\cap[X]|=\dssum_{t=1}^\infty |N\cap[X/m_t]| (f_0(t)-f_0(t-1))
=\sum_{f_0(t) > f_0(t-1)}|N\cap[X/m_t]|\\
&=\sum_{(u_1,\dots,u_s)\in\hat E}\left|N\cap\left[
X\dsprod_{j=1}^s b_j^{-u_j}\right]\right|.
\end{align*}
Denote
$$E_X=\{(u_1,\dots,u_s)\in\hat E:\,u_1+\dots+u_s\le\log_2 X\}.$$
Observe that if $(u_1,\dots,u_s)\in \hat E\setminus E_X$
then  
$$\dsprod_{j=1}^s b_j^{-u_j} \le \dsprod_{j=1}^s 2^{-u_j}<1/X.$$
Therefore,
$$|S\cap[X]| = \sum_{(u_1,\dots,u_s)\in E_X}\left|N\cap\left[
X\dsprod_{j=1}^s b_j^{-u_j}\right]\right|.$$
For $X\ge2$ we have $|E_X|=O((\ln X)^s)$. Applying Lemma~\ref{lemma2} we get
$$|S\cap[X]| = \sum_{(u_1,\dots,u_s)\in E_X}
\vp(B)X\dsprod_{j=1}^s b_j^{-u_j}+O\left((\ln X)^s\right).$$
Since
$$\dssum_{(u_1,\dots,u_s)\in E_X}\dsprod_{j=1}^s b_j^{-u_j}
= \dssum_{(u_1,\dots,u_s)\in \hat E}\dsprod_{j=1}^s b_j^{-u_j}+o(1)
=\gamma(A,B)+o(1)$$
we get
\be\label{rho1}
|S\cap[X]| = \vp(B)\gamma(A,B)X+o(X)
\ee
as desired.
\end{proof}

\begin{rem} Similarly to those arguments we have used to prove
(\ref{rho1}), one can deduce from (\ref{expdiff}) another expression for
$\delta(S)$, namely,
$$\delta(S) = \dssum_{t=1}^\infty f_0(t)\left(\frac 1{m_t}-\frac 1{m_{t+1}}
\right).$$
Therefore,
\be\label{twoexp}
\vp(B)\gamma(A,B) = \dssum_{t=1}^\infty f_0(t)
\left(\frac 1{m_t}-\frac 1{m_{t+1}}\right).
\ee
\end{rem}

\begin{proof}[Proof of Theorem~\ref{thm3}] 
We know from Lemma~\ref{upest} and Lemma~\ref{lowest} that
$$\ol\rho_{\log}(A) \le \vp(B)\gamma(A,B) \le \rho(A).$$
Therefore, by (\ref{aslimitineq}), (\ref{loglimitineq}) and 
(\ref{asloglimitcompare}), we get
$$\rho(A)= \ul\rho(A)= \rho_{\log}(A)=\ul\rho_{\log}(A)
=\ol\rho_{\log}(A)=\vp(B)\gamma(A,B)$$
as desired.
\end{proof}

Also, we see from (\ref{twoexp}) that
\be\label{another}
\rho(A) =\vp(B)\gamma(A,B)= 
\dssum_{t=1}^\infty f_0(t)\left(\frac 1{m_t}-\frac 1{m_{t+1}}\right).
\ee

\subsection{The proof of Theorem~\ref{thm4}}\label{proofthm4}

In the case when $A = \{a_1, a_2, \ldots, a_r\}\subset\N$,
$1 < a_1 < a_2 < \cdots < a_r$ and $a_i$ are pairwise coprime,
we take $B=A$, $s=r$ and $b_i=a_i$ ($i=1,\dots,r$). The condition that
a set $E\subset\Z_+^r$ is an $\UU$-difference free set
means that no two points of $E$ are adjacent
with respect to any line parallel to a coordinate line
(see a related discussion for $r=2$ in subsection~\ref{sec:newres}).
Denote $\gamma(A)=\gamma(A,A)$. Take
$$\hat E=\{(u_1,\dots,u_r)\in\Z_+^r:\,u_1+\dots+u_r\equiv0\pmod{2}\}.$$
In other words, if we color $\Z_+^r$ in the checkerboard order, so that the 
origin is white, then $\hat E$ is the set of white points. We prove that
$\gamma(A)$ is attained for $E=\hat E$.

\begin{lemma}\label{white} We have
$$\dssum_{(u_1,\dots,u_r)\in \hat E}
\dsprod_{j=1}^r a_j^{-u_j} = \gamma(A).$$
\end{lemma}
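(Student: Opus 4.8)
=== BEGIN PROOF PROPOSAL ===

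The plan is to show that the checkerboard set $\hat E$ of white points realizes the supremum $\gamma(A)$ by comparing it against an arbitrary optimal $\UU$-difference free set $\hat F$, whose existence is guaranteed by Lemma~\ref{existence}. The target quantity is the weighted sum $\Phi(E)=\sum_{(u_1,\dots,u_r)\in E}\prod_{j=1}^r a_j^{-u_j}$, and I want to prove $\Phi(\hat E)\ge\Phi(\hat F)$ for every $\UU$-difference free $\hat F$; combined with the definition of $\gamma(A)$ as a supremum over such sets, this forces equality.

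First I would establish the easy inequality $\Phi(\hat E)\le\gamma(A)$, which is immediate once one checks that $\hat E$ is itself $\UU$-difference free: two white points differ in coordinate-sum by an even number, whereas each generator $\ol u_i=\{0,\dots,1,\dots,0\}$ (the standard basis vector, since $B=A$ and $a_i=b_i$) has coordinate-sum exactly $1$, so no two white points can differ by a generator. This uses exactly the checkerboard observation from subsection~\ref{sec:newres} that points of the same color are never adjacent. The content of the lemma is therefore the reverse inequality $\Phi(\hat E)\ge\gamma(A)$, i.e. that no difference-free set can weigh more than the white points.

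For the reverse direction I would exploit the product structure of the weight together with the fact that the $\UU$-difference-free condition decouples along each coordinate axis into an "independent set in a path" condition. Concretely, the key combinatorial fact is a slicing/exchange argument: in any single coordinate direction $j$, the constraint forbids two chosen points from being consecutive along the $a_j$-axis, and on each such line the weights $a_j^{-u_j}$ form a geometric sequence with ratio $a_j^{-1}<1$. On an infinite path with geometrically decaying vertex weights, the maximum-weight independent set is obtained by taking the vertices of even index (the ones nearest the origin), and this choice is compatible simultaneously across all $r$ directions precisely when it is the global parity (checkerboard) choice. I would make this rigorous by an exchange/shifting argument: starting from $\hat F$, repeatedly replace any chosen "black" point (odd coordinate-sum) by pushing weight toward the origin, showing each step cannot decrease $\Phi$, and that the process converges to $\hat E$. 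Because the weights sum absolutely (each geometric series converges), these manipulations are well-defined and the supremum is preserved.

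The main obstacle I anticipate is controlling this exchange globally rather than one coordinate at a time: a single swap that improves the configuration in direction $j$ may create an adjacency in direction $k$, so a naive greedy argument can loop. The cleanest route around this is probably to avoid exchanges entirely and instead give a direct weight-comparison: for a general difference-free $\hat F$, decompose $\Z_+^r$ into translated "dominoes" or into maximal geometric chains aligned with a chosen axis, and bound the contribution of $\hat F$ on each chain by the contribution of $\hat E$ on the same chain using the geometric-decay inequality $\sum_{\text{odd positions}}a_j^{-u}\le\sum_{\text{even positions}}a_j^{-u}$ valid because $a_j>1$. Summing these local inequalities over all chains, and invoking absolute convergence to justify interchanging the order of summation, yields $\Phi(\hat F)\le\Phi(\hat E)$ for every difference-free $\hat F$, hence $\gamma(A)\le\Phi(\hat E)$ and the lemma follows.

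=== END PROOF PROPOSAL ===
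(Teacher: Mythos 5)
Your first step (that $\hat E$ is $\UU$-difference free, hence $\Phi(\hat E)\le\gamma(A)$) is fine, but the core of your argument --- the reverse inequality --- has a genuine gap. The ``cleanest route'' you settle on claims that for a suitable decomposition of $\Z_+^r$ into chains (lines parallel to a chosen axis $j$) or dominoes, the contribution of an arbitrary difference-free set $\hat F$ on each chain is bounded by the contribution of $\hat E$ on \emph{the same} chain. This is false. On a chain whose fixed coordinates $(u_1,\dots,u_{j-1},u_{j+1},\dots,u_r)$ have \emph{odd} sum, the white set $\hat E$ occupies the odd positions $u_j=1,3,5,\dots$, i.e.\ the \emph{lighter} half of the geometric sequence, while a difference-free competitor may occupy the even positions $u_j=0,2,4,\dots$ on that chain (for instance, in $r=2$ the set $\{(1,2k):k\in\Z_+\}$ is difference-free and beats $\hat E$ on the line $u_1=1$ by a factor of $a_2$). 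The same defect kills the domino version: $\hat E$ contains exactly one point of each domino, but on half of the dominoes it is the lighter point. The inequality $\sum_{\text{odd}}a_j^{-u}\le\sum_{\text{even}}a_j^{-u}$ is true but irrelevant here, because $\hat E$ itself is not the even-position set on every chain; the checkerboard configuration is optimal only through a global tradeoff between chains, so no chain-by-chain comparison can close the argument. Your fallback exchange argument is exactly the difficulty you yourself flag (a swap fixing direction $j$ can create an adjacency in direction $k$), and you give no mechanism to resolve it.

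The missing idea, which is how the paper proceeds, is induction on the dimension $r$ combined with pairing of consecutive \emph{hyperplane slices} rather than chains. For a difference-free $E\subset\Z_+^r$, let $E^k\subset\Z_+^{r-1}$ be the slice $u_r=k$. Difference-freeness in direction $r$ forces $E^{2l}$ and $E^{2l+1}$ to be disjoint, so with $\sigma=F_{r-1}(\Z_+^{r-1})$ one gets
\begin{equation*}
\dssum_{\ol u\in E_{2l}\cup E_{2l+1}}\dsprod_{j=1}^r a_j^{-u_j}
\le a_r^{-2l}\left(F_{r-1}(E^{2l})+a_r^{-1}\bigl(\sigma-F_{r-1}(E^{2l})\bigr)\right)
= a_r^{-2l-1}\sigma + a_r^{-2l}\left(1-a_r^{-1}\right)F_{r-1}(E^{2l}),
\end{equation*}
which is monotone increasing in $F_{r-1}(E^{2l})$. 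Each slice $E^{2l}$ is difference-free in $\Z_+^{r-1}$, so the induction hypothesis gives $F_{r-1}(E^{2l})\le F_{r-1}(\hat E')$, where $\hat E'$ is the white set in $\Z_+^{r-1}$; summing over $l$ and noting that every inequality becomes an equality for $E=\hat E$ yields $F(E)\le F(\hat E)$. Note how this uses precisely the two couplings your decomposition discards: the disjointness constraint between adjacent slices (so the odd slice is charged only the \emph{complement} of the even slice, discounted by $a_r^{-1}$), and the $(r-1)$-dimensional optimality of the white pattern inside each even slice.
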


\begin{proof} We use induction on $r$. For $r=0$ there is nothing to prove.
Let $r>0$. Assuming that the assertion is true for $r-1$ we will
prove it for $r$.

We introduce some notation. For $s\le r$ (actually, the cases 
$s=r$ and $s=r-1$ will be interesting for us) and $E\subset\Z_+^s$,
let
$$F_s(E)=\dssum_{(u_1,\dots,u_s)\in E}\dsprod_{j=1}^s a_j^{-u_j}.$$
Denote
$$\sigma=F_{r-1}(\Z_+^{r-1}).$$
For $E\subset \Z_+^s$ and $\ol u\in E$ we consider that 
$\ol u=(u_1,\dots,u_s)$. For $E\subset\Z_+^r$ and $k\in\Z_+$, let
$$E_k=\{\ol u\in E: u_r=k\},\quad
E^k=\{\ol u\in\Z_+^{r-1}:\,(u_1,\dots,u_{r-1},k)\in E_k\}.$$
Observe that for any even $k$ the set $\hat E^k$ is the set $\hat E'$ of 
white points in $\Z_+^{r-1}$. 

Since for any $\UU$-difference free set $E\subset\Z_+^r$ and any
$l\in\Z_+$ the sets $E^{2l}$ and $E^{2l+1}$ are disjoint. Thus,

\begin{align*}
&\dssum_{\ol u\in E_{2l}\cup E_{2l+1}}\dsprod_{j=1}^r b_j^{-u_j}
\le a_r^{-2l}\left(F_{r-1}(E^{2l})+a_r^{-1}(\sigma - F_{r-1}(E^{2l}))
\right)\\
&= a_r^{-2l-1}\sigma + a_r^{-2l}\left(1-a_r^{-1}\right)F_{r-1}(E^{2l}).
\end{align*}

Taking the sum over $l$ we get
\be\label{indstepineq}
F(E)\le\dssum_{l\in\Z_+}a_r^{-2l-1}\sigma
+\dssum_{l\in\Z_+}a_r^{-2l}\left(1-a_r^{-1}\right)F_{r-1}(E^{2l}).
\ee
For $E=\hat E$ inequality (\ref{indstepineq}) is an equality:
\be\label{indstepeq}
F(\hat E)=\dssum_{l\in\Z_+}a_r^{-2l-1}\sigma
+\dssum_{l\in\Z_+}a_r^{-2l}\left(1-a_r^{-1}\right)F_{r-1}(\hat E').
\ee
All sets $E^{2l}$ are $\UU$-difference free sets. By induction supposition, 
we have
$$F_{r-1}(E^{2l})\le F_{r-1}(\hat E').$$
Thus, we get from (\ref{indstepineq}) and (\ref{indstepeq})  
$$F(E)\le F(\hat E).$$
This concludes the proof of the induction step and completes the proof 
of the lemma.
\end{proof}

\begin{proof}[Proof of Theorem~\ref{thm4}] 
By Lemma~\ref{white} and equality~(\ref{another}) we have
$$\rho(A) =\vp(A)\gamma(A).$$ 
It was shown in the proof of Theorem~4 in \cite{Chen}
that
$$\vp(A)\gamma(A) = \frac{1}{2}
\left(1+\dsprod_{i=1}^r \frac{a_i-1}{a_i+1}\right).$$
This completes the proof of the theorem.
\end{proof}

\section{The Proof of Theorem~\ref{thm6}}\label{sec:cardinality}

Recall that we consider the triangle $\triangle_c$ that is defined as an area on the $\{x,\ y\}$ plane: $x \geq 0$, $y \geq 0$, $ax + by \leq c$, where $a$, $b$ and $c$ are positive.  We want to find the largest set of lattice points belonging to the triangle $\triangle_c$ that do not have vertical or horizontal neighbors in the set. 

Suppose we found such a set $S$ with the largest number of points. The basic idea is to move the points in $S$ around inside the triangle without changing the total number of points and without creating new adjacencies in such a way that at the end all the points are the same color. 

Let us take a diagonal $x+y=n$. Let us call the part of our triangle $\triangle_c$ that is on or below the diagonal \emph{the area of interest}. We will move all the points in the optimal set $S$ that lie inside the area of interest so that they become the same color while staying inside the area of interest. We will do this using induction by $n$. We will start with $n=0$ and the induction ends as soon as the whole triangle $\triangle_c$ lies below the diagonal $x+y=n$.

Let us provide the basis for induction. We start with the smallest $n$ such that there exist points in $S$ on the diagonal $x+y=n$. All these points are the same color.

Now assume that for a number $n$ we already moved points in $S$ so that all the points inside the area of interest are of the same color. New $S$ is also an optimal set, so we will refer to it as $S$. Our triangle $\triangle_c$ and the diagonal $x+y=n+1$ will generate a new area of interest. We will prove that all the points in $S$ that lie in the new area of interest can be moved to make them the same color and without creating adjacenies.

All the points on the diagonal $x+y=n+1$ are of the same color. Suppose that $n+1$ is even and that color is white. The case when the color is black is similar, and we will not discuss it. If all the points in the old area of interest are also white we do not have to do anything. Thus, suppose that those points are black.

Now consider several cases. 

\begin{enumerate}
\item The diagonal $x+y=n+1$ intersects the triangle side lying on the line $ax+by=c$.
\item The diagonal $x+y=n+1$ does not intersect the triangle side lying on the line $ax+by=c$. Not all the points with non-negative integer coordinates on the diagonal belong to the set $S$.
\item The diagonal $x+y=n+1$ does not intersect the triangle side lying on the line $ax+by=c$. All the points with non-negative integer coordinates on the diagonal belong to the set $S$.
\end{enumerate}

The first case. The diagonal $x+y=n+1$ intersects the line $ax+by=c$. Suppose the part of the diagonal that is inside the triangle is to the right of the intersection point. In particular that means that the lattice points of the diagonal has an $x$ coordinate greater than zero. Hence, their left neighbors belong to the triangle.

The points on the diagonal that belong to the set $S$ do not have neighbors in the set. In particular that means that each white point on the diagonal that belongs to the set $S$ has a left neighbor that is not in $S$. Now we move all the points in the set $S$ that are on the diagonal $x+y=n+1$ one step to the left. Our white points become black points. No new adjacencies inside the new area of interests are created because all of the points there are black. No new adjacencies between new points and the points outside the new area of intersect are created because the diagonal points moved away from the outside points. See Figure~\ref{fig:firstcase}.

\begin{figure}[htb]
\centering
\includegraphics[scale=0.6]{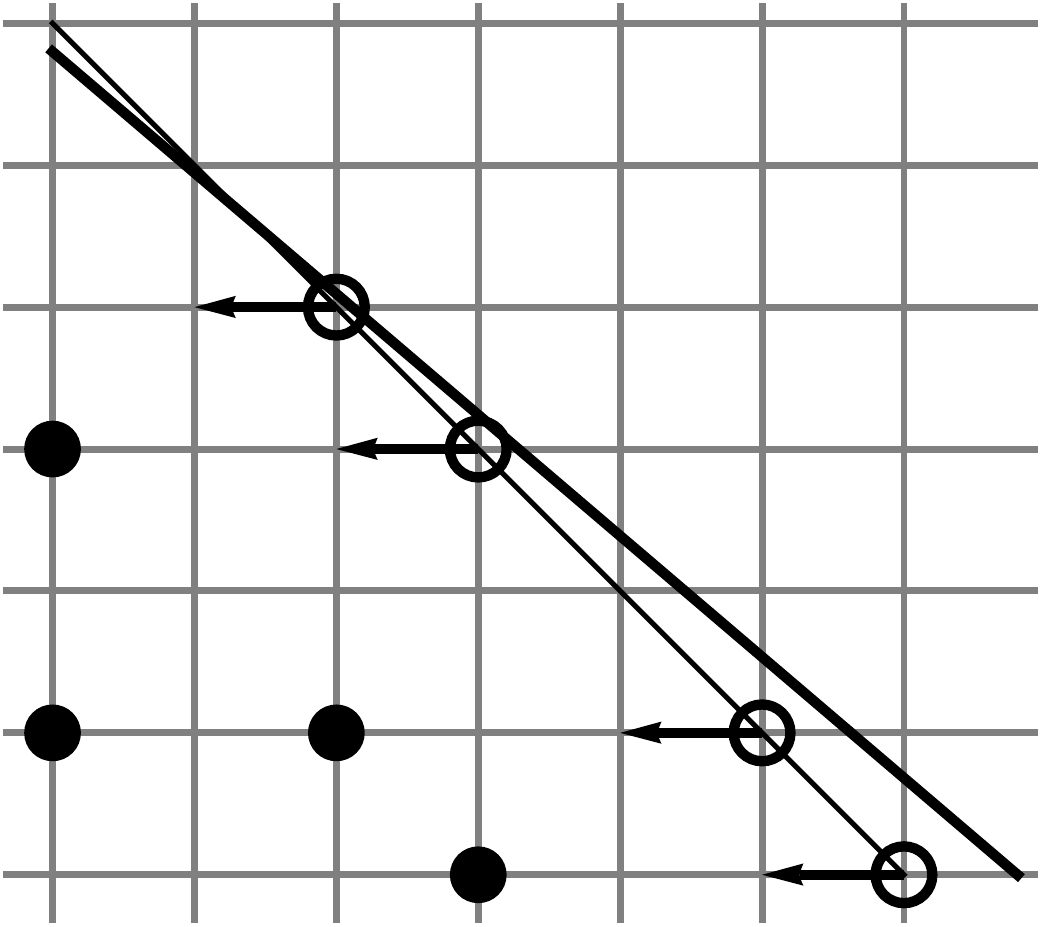}
\caption{The First Case.}
\label{fig:firstcase}
\end{figure}

The second case. The diagonal $x+y=n+1$ does not intersect the line $ax+by=c$, and not all the lattice points with non-negative integers coordinates on the diagonal belong to the set $S$. That means there is a point $P$ with non-negative integer coordinates on the diagonal that does not belong to the set. Now we move all the white points on the diagonal and in $S$ that lie to the left of $P$ one step down. Similarly we move one step to the left all the white points on the diagonal and in $S$ that lie to the right of $P$, see Figure~\ref{fig:secondcase}. The absence of point $P$ in $S$ guarantees that our movements will not collide: points that are moved to the left cannot get into the same position as points that are moved down. Similarly to the first case no new adjacencies are created.

\begin{figure}[htb!]
\centering
\includegraphics[scale=0.6]{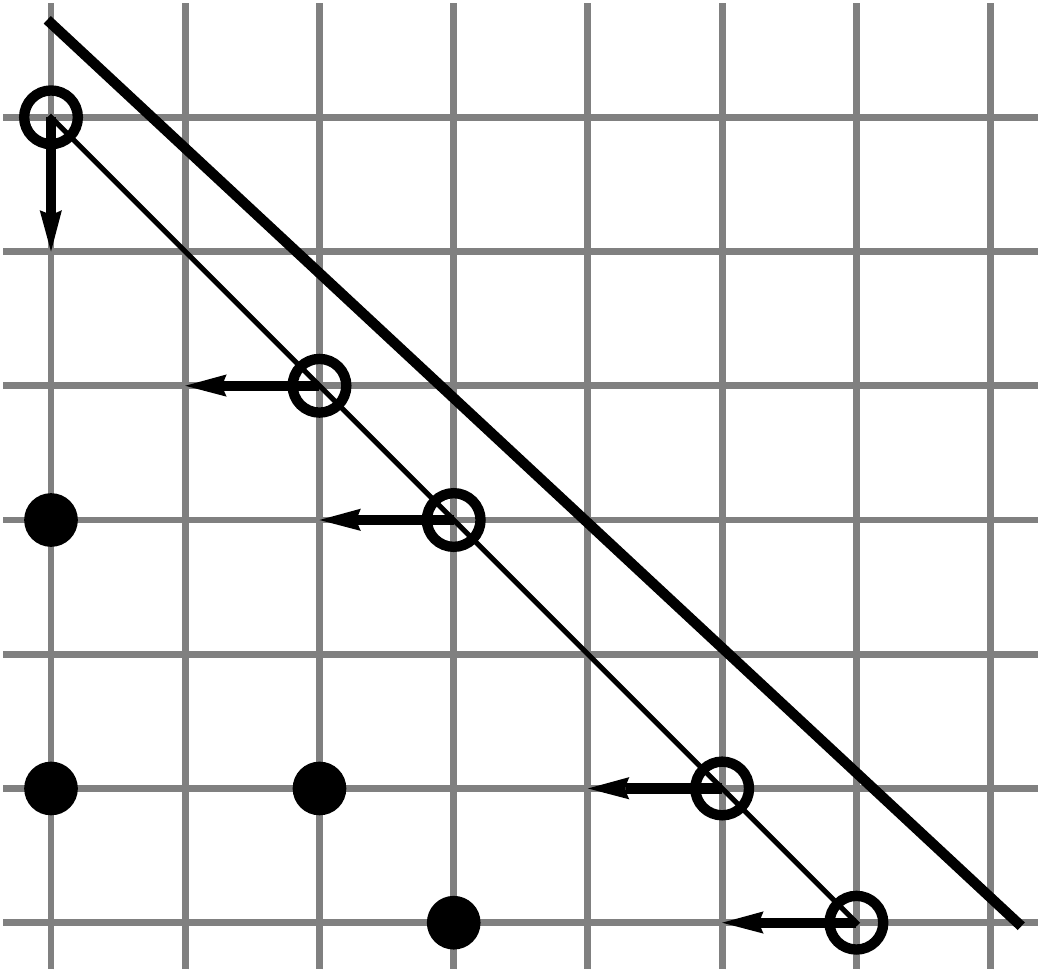}
\caption{The Second Case.}
\label{fig:secondcase}
\end{figure}

The third case. The diagonal $x+y=n+1$ does not intersect the line $ax+by=c$, and all the points with non-negative integer coordinates on the diagonal belong to the set $S$. We cannot move the white diagonal points down or left, because there are more white points on the diagonal $x+y=n+1$ than empty black spots on the diagonal $x+y=n$. But on the plus side, the whole diagonal $x+y=n+1$ belongs to the set $S$ and, therefore, all its neighbors do not. Hence, all the points on the diagonal $x+y=n$ do not belong to the set $S$. Therefore, all the black points in the old area of interest lie, in fact, below the diagonal $x+y=n$. We can move all of them up (or to the right for that matter) one step. They will all become white, but none of the points will become adjacent to the diagonal $x+y=n+1$. Thus, no new adjacencies are created.

Thus, the induction step is proven, and the theorem follows.

\begin{rem}
One might wonder what if we replace our triangle that have sides on the axis 
with any triangle. Will the theorem still be true? Here we show a counter-example. The triangle in Figure~\ref{fig:other} 
contains two lattice points $\{1,\ 0\}$ and $\{0,\ 2\}$. The points are not 
adjacent, so both of them can be included in the maximum set. But they are of 
different color.
\end{rem}

\begin{figure}[htb!]
\centering
\includegraphics[scale=0.3]{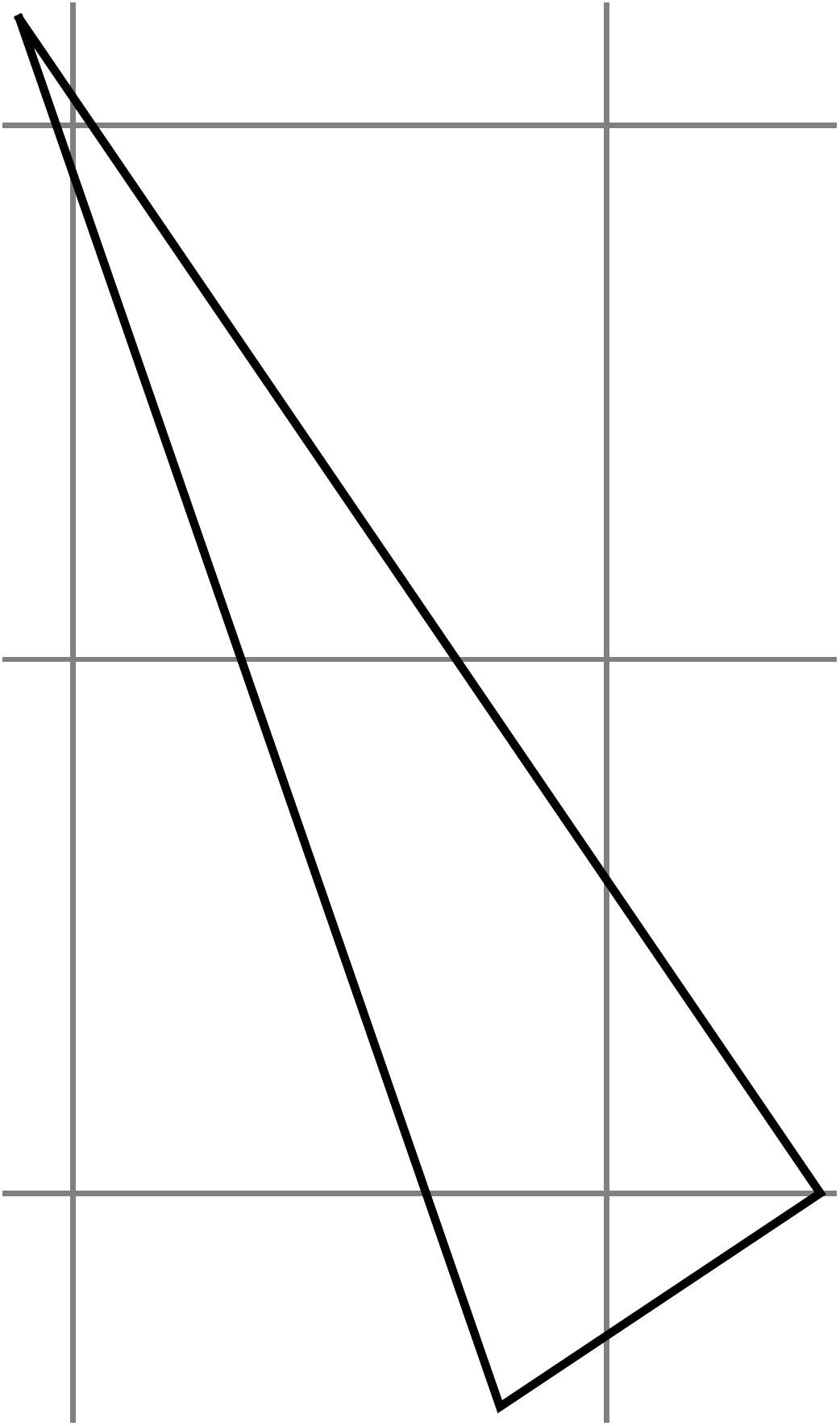}
\caption{A Counter-Example.}
\label{fig:other}
\end{figure}

\section{The Proof of Theorem~\ref{thm8} and Theorem~\ref{thm7}}
\label{sec:densineq}

\subsection{White Points Versus Black Points}\label{sec:slopes}

In this section we want to discuss a question of which color wins in the triangle $\triangle_c$ depending on the parameters of the line $ax+by=c$. First, if the line is very close to the origin and above it, we have only one point inside the triangle and it is white. This statement can be extended into many dimensions.

It is tempting to think that we can always choose white points. We later show that it is not true. On the other hand, if in every class we choose white points we will get an $A$-quotient free set with a simple description. The numbers of the form $p^iq^j z$, where $z$ is relatively prime with $p$ and $q$ and $i+j$ is divisible by 2, generate an $A$-quotient-free set.

Let us move to many dimensions. Consider a simplex $\triangle_c$ formed by a plane $\alpha_1x_1 + \alpha_2x_2 + \cdots + \alpha_rx_r = c$ and coordinate planes $x_i=0$ in an $r$-dimensional space. Let us assume that $\alpha_1 < \alpha_2 < \cdots < \alpha_r$. 

In Theorem~\ref{thm8} we claim that if $\alpha_1/\alpha_2$ is irrational, then there exists $c > 0$ such that the number of black points in the simplex $\triangle_c$ is greater than the number of 
white points.

\begin{proof}
First consider the plane with $c = \alpha_2$. If a lattice point in the simplex $\triangle_c$ has $x_1 > 0$, then all other coordinates of this point must be zero. Moreover, only one point in the simplex is such that $x_i > 0$ for $i > 1$, namely, the point $\{0,1,0,0,\ldots,0\}$. And this point is black.

The number of the points on the $x_1$ axis is $\lfloor \alpha_2/\alpha_1 \rfloor$. If this number is even we have more black points in the simplex $\triangle_c$ than white points. If this number is odd, then the number of black points is the same as the number of white points. Now we will move the plane by increasing $c$ slightly, so that one more point on the $x_1$ axis will belong to the simplex. 

The new $c$ which we denote as $c_1$ is $c_1 = \alpha_1 \lfloor \alpha_2/\alpha_1 \rfloor +\alpha_1$. The number of points on the $x_1$ axis in the simplex becomes even. Notice that more lattice points inside the simplex might appear, but all other points will have all but one coordinates equal to zero, and the non-zero coordinate equal to one. Thus, all extra points are black too. Thus, we found a plane which cuts out more black points than white points.
\end{proof}

We are not much interested in what happens when $\alpha_2/\alpha_1$ is 
rational. Indeed, the case when $A = \{a_1, a_2, \ldots, a_r\}\subset\N$, where 
$1 < a_1 < a_2 < \cdots < a_r$ and $a_i$ are pairwise coprime is translated into a geometric problem with $\alpha_i = \ln a_i$. Thus, the ratio $\alpha_2/\alpha_1$ is 
always irrational.

But we want to mention that the theorem can not be extended 
into cases when $\alpha_2/\alpha_1$ is rational. Consider a 2-dimensional 
space and suppose that $\alpha_1= 1$ and $\alpha_2=2$. The line $\alpha_1 x_1 + \alpha_2 x_2 =c$ passes through lattice points on the plane only when $c \in \Z$. If 
$c = 4k > 0$, $k\in\Z_+$, the number of white points is larger than the number of 
black points by one. If $c= 4k+1$, or $c = 4k + 2$, or $c = 4k + 3$, then the 
number of differently colored points is the same.

\subsection{The proof of Theorem~\ref{thm7}}\label{proofthm7}

As in subsection~\ref{proofthm4}, we take $B=A$. By (\ref{another}), we have
$$\rho(A) = 
\dssum_{t=1}^\infty f_0(t)\left(\frac 1{m_t}-\frac 1{m_{t+1}}\right).$$
By Theorem~\ref{thm4} we can write, in the terminology of 
subsection~\ref{sec:known} that $f_0(t)=A_0(t)$. Therefore,
$$\rho(A) = 
\dssum_{t=1}^\infty A_0(t)\left(\frac 1{m_t}-\frac 1{m_{t+1}}\right).$$
By Corollary~\ref{cor1},
$$\ol{\rho}(A) \ge 
\dssum_{t=1}^\infty \max(A_0(t), A_1(t))
\left(\frac 1{m_t}-\frac 1{m_{t+1}}\right).$$
So, equality $\rho(A)=\ol{\rho}(A)$ can hold only if
$A_0(t)\ge A_1(t)$ for all $t$. But this is excluded by Theorem~\ref{thm8}.
So, we have $\ol{\rho}(A) > \rho(A)$ as desired.

\section{Acknowledgments}

This work was conceived, carried out and completed while the authors were visiting the Institute for Advanced Study in Princeton, NJ. We are grateful to the IAS for its hospitality and excelent working conditions.


\end{document}